\documentclass[a4paper,english]{article}
\usepackage{a4wide}

\usepackage{amsthm}

\newtheorem{theorem}{Theorem}

\newtheorem{remark}[theorem]{Remark}
\newtheorem{definition}[theorem]{Definition}
\newtheorem{lemma}[theorem]{Lemma}

% \documentclass[smallextended,runningheads]{svjour3}     % onecolumn (second format)
% \documentclass[twocolumn]{svjour3}         % twocolumn
%
% \smartqed  % flush right qed marks, e.g. at end of proof
%
\usepackage{graphicx}
\usepackage{amsmath}
\usepackage{epstopdf} % needed if you have eps figures in a pdflatex manuscript
%
% \usepackage{mathptmx}      % use Times fonts if available on your TeX system
%
% insert here the call for the packages your document requires
%\usepackage{latexsym}
% etc.

\usepackage{amssymb}

\usepackage[english]{babel}

\usepackage[bookmarks=true,hidelinks]{hyperref}

% for caligraphic L
\usepackage{mathrsfs}

% enumeration symbols
\usepackage{enumitem}

\usepackage{booktabs}

% for side captions
\usepackage{floatrow}

\usepackage{tikz}
\usepackage{pgfplots}
\pgfplotsset{width=.6\textwidth,compat=newest}

\definecolor{mycolor}{rgb}{0.122, 0.435, 0.698}

%
% please place your own definitions here and don't use \def but
% \newcommand{}{}

\usepackage{functan}

\Macro{Linfty0T01}{\mathrm{L}^{\infty}((0,1)\times(0,T))}
\Macro{Linfty01}{\mathrm{L}^{\infty}(0,1)}
\Macro{Linfty0T}{\mathrm{L}^{\infty}(0,T)}
\Macro{Linfty}{\mathrm{L}^\infty}
\Macro{Linfty0TW1infty01}{\mathrm{L}^\infty(0,T;\mathrm{W}^{1,\infty}(0,1))}
\Macro{Ccinfty01R}{\mathcal{C}_c^{\infty}((0,1)\times \R)}
\Macro{Cinfty010T}{\mathcal{C}^{\infty}([0,1]\times [0,T])}
\Macro{Cc10T}{\mathcal{C}_c^1([0,T])}
\Macro{C1R}{\mathcal{C}^1(\R)}
\Macro{C2R}{\mathcal{C}^2(\R)}
\Macro{L1}{\mathrm{L}^1}
\Macro{L101}{\mathrm{L}^1(0,1)}
\Macro{L10T}{\mathrm{L}^1(0,T)}
\Macro{L101times0T}{\mathrm{L}^1((0,1)\times(0,T))}
\Macro{L101L1}{\mathrm{L}^1(0,1;\mathrm{L}^1(0,T))}
\Macro{C0TL1}{\mathcal{C}([0,T];\m{L101})}
\Macro{Lip0TL1}{\mathrm{Lip}([0,T];\m{L101})}
\Macro{BV01}{\mathrm{B.V.}(0,1)}
\Macro{BV0T}{\mathrm{B.V.}(0,T)}

% abreviations
\newcommand{\eps}{\varepsilon}

\newcommand{\R}{\mathbb{R}}

\newcommand{\dt}{\operatorname{dt}}
\newcommand{\dx}{\operatorname{dx}}
\newcommand{\dy}{\operatorname{dy}}
\newcommand{\ds}{\operatorname{ds}}
\newcommand{\dz}{\operatorname{dz}}

\newcommand{\sign}{\operatorname{sign}}
\newcommand{\Dplust}{D_{+}^t}
\newcommand{\Dminus}{D_{-}}

% \usepackage[dvipsnames]{xcolor}
% \definecolor{myred}{HTML}{FF3D3D}
\definecolor{mycyan}{HTML}{0474BE}
% \definecolor{mygreen}{HTML}{1EB6D6}

%
% Insert the name of "your journal" with
% \journalname{BIT}
%

\title{A convergent finite difference scheme for the Ostrovsky--Hunter equation with Dirichlet boundary conditions}
\author{J. Ridder\thanks{Department of Mathematics, Pennsylvania State University, U.S.A. (jur436@psu.edu)} \and A.\,M. Ruf\thanks{Department of Mathematics, University of Oslo, Norway (adrianru@math.uio.no)\newline
The second author has received funding from the European Union's Framework Programme for Research and Innovation Horizon 2020 (2014-2020) under the Marie Sk{\l}odowska-Curie Grant Agreement No. 642768.}}
\begin{document}

% \title{A convergent finite difference scheme for the Ostrovsky--Hunter equation with Dirichlet boundary conditions\thanks{%
% % \begin{minipage}{1cm}
% % \includegraphics[height=1cm]{flag_yellow_low.eps}
% % \end{minipage}
% % \vspace{-1cm}\flushright
% % \hili{Include European flag}
% The second author has received funding from the European Union's Framework Programme for Research and Innovation Horizon 2020 (2014-2020) under the Marie Sk{\l}odowska-Curie Grant Agreement No. 642768.
% % General acknowledgments should be placed at the end of the article.
% }}
% % \subtitle{Do you have a subtitle?\\ If so, write it here}

% \titlerunning{A convergent finite difference scheme for the Ostrovsky--Hunter equation \ldots}        % if too long for running head

% \author{J. Ridder         \and
%         A. M. Ruf %etc.
% }

% %\authorrunning{Short form of author list} % if too long for running head

% \institute{J. Ridder \at
%               Department of Mathematics\\
%               Pennsylvania State University\\
%               University Park, U.S.A.\\
%               \email{jur436@psu.edu}           %  \\
% %             \emph{Present address:} of F. Author  %  if needed
%            \and
%            A. M. Ruf \at
%               Department of Mathematics\\
%               University of Oslo\\
%               Oslo, Norway\\
%               \email{adrianru@math.uio.no}
% }

% \date{Received: date / Accepted: date}
% % The correct dates will be entered by the editor

\maketitle

\begin{abstract}
We prove convergence of a finite difference scheme to the unique entropy solution of a general form of the Ostrovsky--Hunter equation on a bounded domain with non-homogeneous Dirichlet boundary conditions. Our scheme is an extension of monotone schemes for conservation laws to the equation at hand. The convergence result at the center of this article also proves existence of entropy solutions for the initial-boundary value problem for the general Ostrovsky--Hunter equation. Additionally, we show uniqueness using Kru\v{z}kov's doubling of variables technique. We also include numerical examples to confirm the convergence results and determine rates of convergence experimentally.

% %Include keywords and mathematical subject classification numbers as needed.
% \keywords{Ostrovsky--Hunter equation \and Short-pulse equation \and Vakhnenko equation \and Finite difference methods \and Monotone scheme \and Existence \and Uniqueness \and Stability \and Convergence \and Entropy solution \and Dirichlet boundary conditions}
% % \PACS{PACS code1 \and PACS code2 \and more}
% \subclass{65M06 \and 35M33 \and 35L35}
\end{abstract}

\section{Introduction}

We consider the initial-boundary value problem
\begin{subequations}
\begin{align}
    u_t+f(u)_x &=\gamma\int_0^x u(y,t)\dy,\label{OHeq PDE}\\
    u(x,0)&=u_0(x),\\
    u(0,t)&=\alpha(t),\\
    u(1,t)&=\beta(t),
\end{align}\label{OHeq}
\end{subequations}
with $f\in \m{C2R}$ and $\gamma >0$. Equation \eqref{OHeq PDE} is derived by integrating the nonlinear evolution equation
\begin{equation}
  (u_t+ f(u)_x)_x =\gamma u, \label{first equation}
\end{equation}
in space. This equation was posed by Ostrovsky \cite{ostrovsky1978nonlinear} and Hunter \cite{hunter1990numerical} with $f(u)=\frac{1}{2}u^2$ as a model for small-amplitude long waves on a shallow rotating fluid and is referred to as the Ostrovsky--Hunter equation \cite{boyd2005ostrovsky,liu2010wave,coclite2014some}, short wave equation \cite{hunter1990numerical},
Vakhnenko equation \cite{vakhnenko1992solitons,parkes1993stability,vakhnenko1998two,morrison1999n,vakhnenko2002calculation},
Ostrovsky--Vakhnenko equation \cite{BOUTETDEMONVEL2014189,brunelli2013hamiltonian}
and reduced Ostrovsky equation \cite{ostrovsky1978nonlinear,stepanyants2006stationary,parkes2007explicit}. If $f(u)=-\frac{1}{6}u^3$, equation \eqref{first equation} is known as the short pulse equation, which was introduced by Sch\"afer and Wayne \cite{schafer2004propagation} as a model for the propagation of ultra-short light pulses in silica optical fibers (see also \cite{Amiranashvili2010,liu2009wave}). In the present paper, however, we will consider an arbitrary flux $f\in\m{C2R}$ and will refer to equation \eqref{OHeq PDE} with general $f$ as Ostrovsky--Hunter equation.

In order to derive equation \eqref{OHeq PDE}, we integrate equation \eqref{first equation} in space to get
\begin{align}
\begin{split}
  u_t +f(u)_x&=\gamma P,\\
  P_x&=u.
\end{split}
\label{second equation}
\end{align}
The function $P$ must then be further specified by an additional constraint, e.g. $P(-\infty,t)=0$ (which leads to $P=\int_{-\infty}^x u $; see \cite{coclite2015oleinik}) or $\int P=0$ (implying $P=\int_{-\infty}^x u - \int_{-\infty}^{\infty} u$ on the real line or $P=\int_0^x u -\int_0^1 u$ in the unit interval; see \cite{hunter1990numerical,stepanyants2006stationary,liu2010wave,coclite2017convergent}). Here we will consider the unit interval and choose $P(0,t)=0$, which gives
\begin{equation}
  P[u](x,t)=\int_0^x u(t,y)\dy. \label{P equation}
\end{equation}

Concerning the initial and boundary data, we will assume
\begin{equation}
  u_0\in\m{BV01}\text{ and } \alpha,\beta\in\m{BV0T}.
\end{equation}

Coclite, di Ruvo and Karlsen developed a global well-posedness analysis utilizing the concept of entropy solutions defined in a distributional sense (see \eqref{entropy condition} in Definition~\ref{def: entropy solution} below) on the domains $\R\times \R^+$ and $\R^+\times\R^+$ in \cite{coclite2015oleinik,coclite2015well,coclite2015short,coclite2015wellposedness,coclite2016well,coclite2014some,di2013discontinuous} and on $[0,1]\times\R^+$ with non-homogeneous Dirichlet boundary conditions in \cite{coclite2018initial}. Their proofs are based on a vanishing viscosity regularization and a compensated compactness argument.

In this paper, we aim to show existence of entropy solutions (as defined in Definition~\ref{def: entropy solution} below) to the initial-boundary value problem \eqref{OHeq} by proving the convergence of a finite difference scheme. We will base our construction of the numerical scheme on the classical theory of monotone schemes for conservation laws and use central differences for the nonlocal source term. In order to get compactness of the scheme, we will employ Helly's theorem together with appropriate a priori bounds of the piecewise constant interpolation. Then, we will show convergence towards the entropy solution using discrete versions of the entropy conditions in the interior of the domain and at the boundary. Furthermore, we prove uniqueness of entropy solutions by showing $\m{L1}$ stability 
using Kru\v{z}kov's `doubling of variables' technique.
% with the help of a 'Kuznetsov type' lemma. \hili{Convergence rate?}

Without convergence proof, numerical methods for equation \eqref{first equation} are used in \cite{grimshaw2012reduced,hunter1990numerical,liu2010wave}, including Fourier pseudo-spectral methods and a finite difference scheme based on the Engquist--Osher scheme. So far the only rigorous numerical analysis of the Ostrovsky--Hunter equation is performed by Coclite, Ridder and Risebro \cite{coclite2017convergent}. The authors, however, consider the case of periodic boundary conditions and initial data with zero mean. The present paper directly extends these results to the setting of non-periodic boundary conditions. 
Although we follow the general strategy of \cite{coclite2017convergent}, the non-periodicity complicates matters throughout. In particular, we will present new versions of Harten's lemma and Kru\v{z}kov's `doubling of variables' technique that properly address the contributions of the boundary terms.
% 
% Finally, in \cite{coclite2017convergent}, the authors prove the convergence of a finite difference scheme to the unique entropy solution of \eqref{first equation} on a bounded domain with periodic boundary conditions, and thus also providing an existence proof for periodic entropy solutions. In this paper, we will consider non-periodic, non-homogeneous boundary conditions and do not assume that the initial datum has zero mean. Although this paper follows the general strategy of \cite{coclite2017convergent}, the non-periodicity complicates matters throughout.

We will consider entropy solutions of \eqref{OHeq} based on the following definition:
\begin{definition}[Entropy solution]\label{def: entropy solution}
  A function $u\in\m{C0TL1}\cap\m{Linfty0T01}$ is called an entropy solution of the Ostrovsky--Hunter equation \eqref{OHeq} if for all entropy pairs $(\eta,q)$, i.e. convex functions $\eta\in\m{C2R}$, and $q$ such that $q'=\eta'f'$,
  \begin{multline}
    \int_0^T\int_0^1 (\eta(u)\phi_t + q(u)\phi_x +\gamma \eta'(u)P[u]\phi)\dx\dt + \int_0^1 \eta(u_0(x))\phi(x,0)\dx \\
    -\int_0^1 \eta(u(x,T))\phi(x,T)\dx \geq0, \label{entropy condition}
  \end{multline}
  for all nonnegative $\phi\in\m{Ccinfty01R}$, and
  \begin{multline}
    q(u_0^{\tau}(t)) - q(\alpha(t)) -\eta'(\alpha(t))(f(u_0^{\tau}(t))-f(\alpha(t))) \leq 0 \\
    \leq q(u_1^{\tau}(t)) - q(\beta(t)) -\eta'(\beta(t))(f(u_1^{\tau}(t))-f(\beta(t)))\label{entropy bc}
  \end{multline}
  holds for a. e. $t\in(0,T)$. Here $P[u]$ is as in \eqref{P equation} and $u_0^{\tau}$ and $u_1^{\tau}$ denote the strong traces of $u$ at the boundary $x=0$ respectively $x=1$.
\end{definition}
\begin{remark}
Note that by an approximation argument, cf. \cite[pp. 57-58]{holden2015front}, a function $u\in\m{C0TL1}$ is an entropy solution if and only if inequalities \eqref{entropy condition} and \eqref{entropy bc} hold for all Kru\v{z}kov entropy pairs,
    \begin{equation*}
      \eta(u,k)=|u-k|,\qquad q(u,k)=\sign(u-k)(f(u)-f(k)),\qquad k\in\R.
     \end{equation*}
\end{remark}
\begin{remark}
This is the usual definition of entropy solutions of equation \eqref{OHeq}. However, regarding the entropy boundary condition instead of working with the original condition due to Bardos, le Roux and N\'ed\'elec \cite{doi:10.1080/03605307908820117}, we will use the entropy boundary condition \eqref{entropy bc} introduced by Dubois and LeFloch \cite{Dubois1989}.
Due to the regularizing effect of the $P$ equation \eqref{P equation} we have that $u\in\m{Linfty0T01}$ implies $P[u]\in\m{Linfty0TW1infty01}$. Therefore, if $u\in\m{Linfty0T01}$ satisfies the entropy condition \eqref{entropy condition}, then \cite[Theorem 1.1]{COCLITE20093823} assures the existence of strong traces $u^{\tau}_0, u^{\tau}_1$ and hence boundary entropy condition \eqref{entropy bc} is well-defined.
\end{remark}

% In the following we will only consider $\gamma>0$ since the other case $\gamma<0$ is covered by the reflection $x\to -x$ and $u\to -u$ of the solutions for $\gamma>0$.
% Our main theorem reads as follows.
% \begin{theorem}[Main Theorem]\label{main theorem}
%   Let $f\in\m{C2R}$, $u_0\in \m{BV01}$ and $\alpha,\beta\in\m{BV01}$. Then there exists a unique entropy solution of the Ostrovsky--Hunter equation \eqref{OHeq} which is the limit of a sequence of piecewise constant approximations obtained by a monotone finite difference scheme.
% \end{theorem}

The paper is organized as follows. In Section \ref{sec: numerical scheme} we specify the numerical scheme under consideration. Section \ref{sec: a priori estimates} contains discrete a priori bounds which are used to show compactness of the scheme.
In the next section we will develop discrete entropy inequalities both in the interior and at the boundary which will lead to our first main result, the convergence of the numerical solutions to an entropy solution, see Theorem \ref{thm: conv towards entropy solution} in Section \ref{sec: convergence towards entropy solution}.
% In the next section we will develop discrete entropy inequalities both in the interior and at the boundary and use those to show that the limit of the approximate solutions is an entropy solution.
Our second main result, the $\m{L1}$ stability and thus uniqueness of entropy solutions, is shown in Section \ref{sec: uniqueness}, Theorem \ref{thm: uniqueness}, using Kru\v{z}kov’s `doubling of variables' technique.
% In Section \ref{sec: uniqueness} we show $\m{L1}$ stability and thus uniqueness of entropy solutions using Kru\v{z}kov's `doubling of variables' technique concluding the proof of Theorem \ref{main theorem}.
%which will also be used in Section \ref{sec: convergence rates} \hili{to derive the rate of convergence of the scheme}. 
Finally, the last section provides some numerical experiments.

\section{The numerical scheme}\label{sec: numerical scheme}

We discretize the domain $[0,1]\times[0,T]$ using $(N+1)\cdot(M+2)$ grid points with $\Delta x = 1/N$ and $\Delta t = \frac{T}{M+1}$, such that for $j=0,\ldots,N$ and $n=0,\ldots,M+1$,
\begin{equation*}
  u_j^n\approx u(x_j,t^n),\qquad \text{where } x_j=j\Delta x\text{ and } t^n=n\Delta t.
\end{equation*}
As a shorthand notation for the sequence $(u_j^n)_{j=0}^N$ we will write $u^n$. 
We will also frequently use the notation $I_j=[x_{j-\frac{1}{2}},x_{j+\frac{1}{2}})$ for the interval in space, $I^n=[t^n,t^{n+1})$ for the interval in time and $I_j^n = I_j\times I^n$ for the rectangle in $[0,1]\times[0,T]$. Here, we fix the convention that $x_{j+\frac{1}{2}}=(j+\frac{1}{2})\Delta x$, $j=0,\ldots,N-1$, as well as $x_{-\frac{1}{2}} = x_0=0$ and $x_{N+\frac{1}{2}}=x_N=1$.
In order to get from the sequence $u^n$ to a function on $[0,1]\times[0,T]$ we define the piecewise constant interpolation
\begin{equation*}
  u_{\Delta t}(x,t) = u_j^n,\qquad \text{for } (x,t)\in I_j^n.
\end{equation*}
% \begin{mdframed}
The discrete initial datum $u^0$ is constructed from $u_0\in\m{BV01}$ via
\begin{equation*}
  u_j^0 = \frac{1}{\Delta x} \int_{I_j}u_0(x)\dx,\quad \text{for }j=0,\ldots,N.
\end{equation*}
Then, the numerical scheme we want to employ reads as follows: For $n\geq 0$ we set
\begin{equation}
  \begin{cases}
    u_0^{n+1} = \frac{1}{\Delta t}\int_{t^n}^{t^{n+1}}\alpha(s)\ds, &\\
    u_N^{n+1} = \frac{1}{\Delta t}\int_{t^n}^{t^{n+1}}\beta(s)\ds, &\\
    u_j^{n+1} = u_j^n - \lambda \left(F_{j+\frac{1}{2}}^n - F_{j-\frac{1}{2}}^n\right) + \gamma\Delta t P_j^n & \text{if }j=1,\ldots, N-1,
  \end{cases} \label{numscheme}
\end{equation}
% For $j=0$ and $n\in\N$, set $u_0^n=\frac{1}{\Delta t}\int_{t^n}^{t^{n+1}}\alpha(s)\ds$.
% For $j=N$ and $n\in\N$, set $u_N^n=\frac{1}{\Delta t}\int_{t^n}^{t^{n+1}}\beta(s)\ds$.
% For $j=1,2,\ldots,N-1$ and $n=0$, set $u_j^0=\frac{1}{\Delta x}\int_{x_{j-\frac{1}{2}}}^{x_{j+\frac{1}{2}}}u_0(y)\dy$.
% For $j=1,2, \ldots, N-1$ and $n\geq 0$, set
% \begin{equation}
%   u_j^{n+1}=u_j^n - \lambda \left(F_{j+\frac{1}{2}}^n - F_{j-\frac{1}{2}}^n\right) + \Delta t \gamma P_{j}^n,\label{numscheme}
%  \end{equation} 
where $P_j^n$ is the following approximation to the integral of $u$,
\begin{equation*}
  P_{j}^n = \Delta x \left(\frac{1}{2}u_0^n +\sum_{i=1}^{j-1} u_i^n + \frac{1}{2}u_j^n\right),
\end{equation*}
and the flux at $(x_{j+\frac{1}{2}},t^n)$ is approximated by
\begin{equation}
  F_{j+\frac{1}{2}}^n = F(u_j^n,u_{j+1}^n), \label{numerical flux function}
\end{equation}
where the discrete flux $F$ is a Lipschitz continuous function in two variables. We will assume that $F$ can be written in the form
\begin{equation*}
  F(u,v) = F_1(u)+F_2(v),
\end{equation*}
where $F_1,F_2\in\m{C1R}$, and that $F$ is consistent with $f$ and monotone in the sense that
\begin{equation}
  F(u,u) = f(u)\quad \text{and}\quad F'_1\geq 0,\,F'_2\leq 0. \label{properties of discrete flux I}
\end{equation}
Furthermore, we will assume
\begin{equation}
  \max_u \lambda(F'_1(u) - F'_2(u))\leq 1,\label{properties of discrete flux II}
\end{equation}
where $\lambda = \frac{\Delta t}{\Delta x}$.
% \end{mdframed}
Two examples for discrete flux functions with the assumed properties are the Lax-Friedrichs flux, i.e.
\begin{equation*}
  F_1(u) = \frac{1}{2}f(u) + \frac{1}{2\lambda}u,\qquad F_2(v) = \frac{1}{2}f(v) - \frac{1}{2\lambda}v,
\end{equation*}
and the Engquist-Osher flux, i.e.
\begin{equation*}
  F_1(u)=\int_0^u\max(f'(z),0)\dz + f(0),\qquad F_2(v) = \int_0^v\min(f'(z),0)\dz + f(0),
\end{equation*}
which satisfy \eqref{properties of discrete flux I} and \eqref{properties of discrete flux II} provided that the grid satisfies the CFL condition
\begin{equation*}
  \max_{u}|f'(u)|\lambda \leq 1.
\end{equation*}
Note that, using our scheme, we can recover a discrete version of \eqref{second equation}, since
\begin{align*}
  \Dplust u_j^n + \Dminus F_{j+\frac{1}{2}}^n &= \gamma P_j^n\\
  \Dminus P_j^n &= \frac{1}{2}\left(u_j^n+u_{j-1}^n\right),
\end{align*}
were we used the following difference operators:
\begin{equation*}
  \Dplust a^n = \frac{1}{\Delta t}\left(a^{n+1}-a^n\right)\qquad \text{and}\qquad \Dminus a_j = \frac{1}{\Delta x}\left(a_j-a_{j-1}\right).
\end{equation*}

\section{Discrete a priori estimates}\label{sec: a priori estimates}

In this section we aim to prove compactness of the scheme using Helly's theorem. This requires an $\m{Linfty}$ bound, a BV bound and a bound on the discrete time derivative of the numerical solution.
These bounds are similar to the ones in \cite{coclite2017convergent}, but the boundary conditions lead to additional terms.

\begin{lemma}[$\m{Linfty}$ bound]\label{Linfty bound}
  For $n\Delta t \leq T$, the solution $u^n$ of the numerical scheme \eqref{numscheme} satisfies
  \begin{equation*}
    \norm*{\infty}{u^n}\leq e^{\gamma T}\left(\norm*{\infty}{u^0} + \norm*{\infty}{\alpha}+\norm*{\infty}{\beta}\right).
  \end{equation*}
\end{lemma}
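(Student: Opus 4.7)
The plan is a straightforward adaptation of the classical $L^\infty$ bound for monotone schemes, with the source term $\gamma\Delta t\, P_j^n$ contributing the exponential factor and the boundary values of $\alpha,\beta$ treated in a way that keeps all constants compatible.

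First I would rewrite the interior update as a convex combination. Applying the mean value theorem to the splitting $F(u,v)=F_1(u)+F_2(v)$, one gets
\begin{equation*}
  u_j^{n+1} = \bigl(1-\lambda F_1'(\xi_j^n)+\lambda F_2'(\eta_j^n)\bigr) u_j^n + \lambda F_1'(\xi_j^n)\, u_{j-1}^n - \lambda F_2'(\eta_j^n)\, u_{j+1}^n + \gamma\Delta t\, P_j^n,
\end{equation*}
for suitable intermediate points. By the monotonicity assumption \eqref{properties of discrete flux I} the three coefficients are nonnegative, and by the CFL-type condition \eqref{properties of discrete flux II} they sum to $1$, so the affine part is a convex combination.

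Next I would bound $P_j^n$. Since the weights $\Delta x(\tfrac12+(j-1)+\tfrac12)=j\Delta x\leq 1$ for all $j\leq N$, the definition gives $|P_j^n|\le M^n$, where $M^n:=\max\{\|u^n\|_\infty,\|\alpha\|_\infty,\|\beta\|_\infty\}$. Taking absolute values in the convex-combination identity and using $|u_i^n|\le M^n$ yields, for the interior indices,
\begin{equation*}
  |u_j^{n+1}| \leq M^n + \gamma\Delta t\, M^n = (1+\gamma\Delta t)\, M^n.
\end{equation*}
For the boundary indices the scheme directly gives $|u_0^{n+1}|\leq\|\alpha\|_\infty\leq M^n$ and $|u_N^{n+1}|\leq\|\beta\|_\infty\leq M^n$. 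Consequently $\|u^{n+1}\|_\infty\leq (1+\gamma\Delta t)M^n$, so $M^{n+1}\leq (1+\gamma\Delta t)M^n$ as well.

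A simple induction then yields $M^n\leq (1+\gamma\Delta t)^n M^0\leq e^{\gamma n\Delta t} M^0\leq e^{\gamma T} M^0$, and since $M^0\leq \|u^0\|_\infty+\|\alpha\|_\infty+\|\beta\|_\infty$, the bound claimed in the lemma follows. The only subtle point — and, I would expect, the main obstacle in writing the argument cleanly — is to fold the boundary data $\alpha,\beta$ into the running bound from the start (via the quantity $M^n$) so that the iteration does not accumulate an extra $1/(\gamma\Delta t)$ factor; once that is done, monotonicity plus the CFL condition do essentially all the work.
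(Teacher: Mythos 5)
Your overall strategy --- bounding $|P_j^n|$ by the running supremum, folding $\alpha$ and $\beta$ into the quantity $M^n$ from the start, and iterating the factor $(1+\gamma\Delta t)$ --- is exactly the paper's, and that part of the argument is fine. The gap is in the very first step, the claim that the convective part of the update is a convex combination. In your mean-value decomposition the coefficient of $u_j^n$ is $1-\lambda F_1'(\xi_j^n)+\lambda F_2'(\eta_j^n)$, where $\xi_j^n$ lies between $u_{j-1}^n$ and $u_j^n$ while $\eta_j^n$ lies between $u_j^n$ and $u_{j+1}^n$; these are in general \emph{different} points, whereas condition \eqref{properties of discrete flux II} only controls $\lambda\bigl(F_1'(u)-F_2'(u)\bigr)$ evaluated at a \emph{single} point $u$. (Also, the three coefficients sum to $1$ identically, with no condition needed; monotonicity \eqref{properties of discrete flux I} gives nonnegativity only of the two outer coefficients. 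It is the nonnegativity of the middle one that is at stake, and it does not follow.) The coefficient can indeed be negative under the paper's hypotheses: for the Lax--Friedrichs flux one computes $1-\lambda F_1'(\xi)+\lambda F_2'(\eta)=\tfrac{\lambda}{2}\bigl(f'(\eta)-f'(\xi)\bigr)$, which is negative whenever $f'(\xi)>f'(\eta)$, e.g.\ for Burgers' flux with locally decreasing data. Once a coefficient is negative, taking absolute values no longer yields $|u_j^{n+1}|\leq M^n+\gamma\Delta t\,|P_j^n|$, so the induction does not start. Note that the situation is different from the TVD estimate in Lemma \ref{BV bound}: there the relevant sum $C_{j+\frac12}^n+D_{j+\frac12}^n$ involves two difference quotients over the \emph{same} pair $(u_j^n,u_{j+1}^n)$, so a single application of the mean value theorem to $F_1-F_2$ closes the argument; here the two difference quotients live on different pairs and cannot be merged.

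The paper avoids this by not decomposing at all: it compares $u^n$ with the constant state $v_j^n\equiv\|u^n\|_\infty$ and uses that the update map $H(a,b,c)=b-\lambda\bigl(F(b,c)-F(a,b)\bigr)$ is nondecreasing in each argument separately --- the partial derivative in the middle slot is $1-\lambda\bigl(F_1'(b)-F_2'(b)\bigr)$, evaluated at the single point $b$, which is nonnegative precisely by \eqref{properties of discrete flux II} --- together with consistency, which gives $H(v,v,v)=v$. If you replace your convex-combination step by this monotone-comparison step (or, alternatively, strengthen the CFL condition to $\lambda\bigl(\sup F_1'-\inf F_2'\bigr)\leq 1$, under which your decomposition really is a convex combination), the remainder of your argument goes through unchanged and reproduces the paper's bound.
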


\begin{proof}
  For $j=1,\ldots,N-1$ we define $v_j^n = \norm*{\infty}{u^n}$. Then $v_j^n\geq u_j^n$ for all $j=1,\ldots,N-1$ and thus, by monotonicity and consistency of the scheme \eqref{properties of discrete flux I}-\eqref{properties of discrete flux II},
  \begin{align*}
    u_j^n -\lambda (F(u_j^n,u_{j+1}^n)- & F(u_{j-1}^n,u_j^n)) \\
    &= u_j^n -\lambda (F_1(u_j^n)-F_2(u_j^n)) + \lambda F_1(u_{j-1}^n) - \lambda F_2(u_{j+1}^n)\\
    &\leq v_j^n -\lambda(F_1(v_j^n)-F_2(v_j^n)) + \lambda F_1(v_{j-1}^n)-\lambda F_2(v_{j+1}^n)\\
    &= v_j^n -\lambda (F(v_j^n,v_{j+1}^n)-F(v_{j-1}^n,v_j^n))\\
    &= v_j^n.
  \end{align*}
  Hence, we have
  \begin{equation*}
    |u_j^{n+1}|\leq \norm*{\infty}{u^n}+\gamma\Delta t|P_j^n|.
  \end{equation*}
  for $j=1,2,\ldots,N-1$. Because $N\Delta x=1$, also $P_j^n$ is bounded:
  \begin{equation}
    \left|P_j^n\right| \leq \Delta x \sum_{i=0}^j |u_i^n| \leq N\Delta x \norm*{\infty}{u^n} = \norm*{\infty}{u^n}. \label{P bound}
  \end{equation}
  Regarding the boundary terms, clearly
  \begin{align*}
    &|u_0^{n+1}|\leq \norm*{\infty}{\alpha} &&\text{as well as} && |u_N^{n+1}|\leq \norm*{\infty}{\beta}.
  \end{align*}
  Thus, we have
  \begin{align*}
    \norm*{\infty}{u^n} &\leq (1+\gamma \Delta t)^n \left(\norm*{\infty}{u^0} + \norm*{\infty}{\alpha}+\norm*{\infty}{\beta}\right)\\
    &\leq e^{\gamma n\Delta t}\left(\norm*{\infty}{u^0}+ \norm*{\infty}{\alpha}+\norm*{\infty}{\beta}\right) \\
    &\leq e^{\gamma T}\left(\norm*{\infty}{u^0}+ \norm*{\infty}{\alpha}+\norm*{\infty}{\beta}\right)
  \end{align*}
  for $n\Delta t\leq T$.
\end{proof} 

The next lemma is a version of Harten's lemma \cite{harten1983high} on bounded domains that additionally uses the $\m{Linfty}$ bound from Lemma \ref{Linfty bound} to estimate the contribution of the source term to the total variation.
\begin{lemma}[B.V. bound]\label{BV bound}
  For $n\Delta t\leq T$, the solution $u^n$ of the numerical scheme \eqref{numscheme} satisfies
  \begin{equation*}
    |u^{n}|_{\m{BV01}} \leq C_T \left(|u^0|_{\m{BV01}} + |\alpha|_{\m{BV01}} + |\beta|_{\m{BV01}} + \norm*{\infty}{u^0} + \norm*{\infty}{\alpha} + \norm*{\infty}{\beta}\right)
  \end{equation*}
  where $C_T$ denotes a constant depending on $\gamma$ and $T$.
\end{lemma}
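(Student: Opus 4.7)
The plan is to adapt Harten's lemma to this non-periodic setting by treating interior differences $u_j^{n+1}-u_{j-1}^{n+1}$ and the two boundary jumps (at $j=1$ and $j=N$) separately, and then showing that the boundary jumps exactly restore whatever coefficient deficit is left by the interior computation. Using the splitting $F=F_1+F_2$ with $F_1$ nondecreasing and $F_2$ nonincreasing, I would introduce the nonnegative incremental quantities
\begin{equation*}
A_{j-1/2}^n = \frac{F_1(u_j^n)-F_1(u_{j-1}^n)}{u_j^n-u_{j-1}^n}, \qquad B_{j-1/2}^n = -\frac{F_2(u_j^n)-F_2(u_{j-1}^n)}{u_j^n-u_{j-1}^n},
\end{equation*}
(extended by continuity at coincidences) so that the CFL condition \eqref{properties of discrete flux II} gives $\lambda(A_{j-1/2}^n+B_{j-1/2}^n)\leq 1$. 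A direct computation for $j\in\{2,\ldots,N-1\}$ then yields
\begin{equation*}
u_j^{n+1}-u_{j-1}^{n+1} = (1-\lambda A_{j-1/2}^n-\lambda B_{j-1/2}^n)(u_j^n-u_{j-1}^n) + \lambda A_{j-3/2}^n(u_{j-1}^n-u_{j-2}^n) + \lambda B_{j+1/2}^n(u_{j+1}^n-u_j^n) + \gamma\Delta t(P_j^n-P_{j-1}^n),
\end{equation*}
in which the three main coefficients are nonnegative. Taking absolute values, summing over $j$ from $2$ to $N-1$, and reindexing, each interior difference $|u_j^n-u_{j-1}^n|$ with $j\in\{3,\ldots,N-2\}$ receives total coefficient $1$, while the extremal differences pick up only $\lambda A_{1/2}^n$ at $j=1$ and $\lambda B_{N-1/2}^n$ at $j=N$, and the differences at $j=2$ and $j=N-1$ get $1-\lambda B_{3/2}^n$ and $1-\lambda A_{N-3/2}^n$ respectively; the source contribution is controlled using $|P_j^n-P_{j-1}^n|\leq \Delta x\,\|u^n\|_\infty$ as in the proof of Lemma \ref{Linfty bound}.

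The main obstacle, compared with the periodic case in \cite{coclite2017convergent}, is to repair these four deficits using the two boundary jumps. For $j=1$ I would combine the interior update for $u_1^{n+1}$ with the boundary assignment for $u_0^{n+1}$ to obtain
\begin{equation*}
u_1^{n+1}-u_0^{n+1} = (1-\lambda A_{1/2}^n)(u_1^n-u_0^n) + \lambda B_{3/2}^n(u_2^n-u_1^n) + (u_0^n-u_0^{n+1}) + \gamma\Delta t\,P_1^n,
\end{equation*}
and symmetrically for $|u_N^{n+1}-u_{N-1}^{n+1}|$. The coefficients on the four affected differences then precisely complete to $1$, and one is left with the clean recursion
\begin{equation*}
|u^{n+1}|_{\mathrm{B.V.}(0,1)} \leq |u^n|_{\mathrm{B.V.}(0,1)} + |u_0^{n+1}-u_0^n| + |u_N^{n+1}-u_N^n| + C\gamma\Delta t\,\|u^n\|_\infty.
\end{equation*}

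Finally, iterating from $n=0$ and invoking the $L^\infty$ bound from Lemma \ref{Linfty bound} to control $\|u^k\|_\infty$ uniformly by a constant times $\|u^0\|_\infty+\|\alpha\|_\infty+\|\beta\|_\infty$, it remains to bound the telescope $\sum_{k=0}^{n-1}|u_0^{k+1}-u_0^k|$. For $k\geq 1$ both $u_0^k$ and $u_0^{k+1}$ are averages of $\alpha$ over consecutive intervals, so that partial sum is dominated by $|\alpha|_{\mathrm{B.V.}(0,T)}$; the transitional term $|u_0^1-u_0^0|$ involves the initial trace and is absorbed into $\|u^0\|_\infty+\|\alpha\|_\infty$. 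The analogous argument at $x=1$ produces $|\beta|_{\mathrm{B.V.}(0,T)}+\|u^0\|_\infty+\|\beta\|_\infty$, and collecting all prefactors into a single $T$- and $\gamma$-dependent constant $C_T$ yields the stated estimate.
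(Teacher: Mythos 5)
Your proposal is correct and follows essentially the same route as the paper: a Harten-type incremental-coefficient estimate on the interior differences (your $\lambda A_{j-1/2}^n,\lambda B_{j-1/2}^n$ are exactly the paper's $D_{j-1/2}^n, C_{j+1/2}^n$), with the two boundary jumps computed separately so that their coefficients exactly restore the deficits left at the four near-boundary differences, the source term controlled by the $\m{Linfty}$ bound, and the boundary time-differences telescoped into $|\alpha|_{\m{BV0T}}$ and $|\beta|_{\m{BV0T}}$. Your explicit treatment of the transitional term $|u_0^1-u_0^0|$ is in fact slightly more careful than the paper's, but the argument is otherwise identical.
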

\begin{proof}
  For $n=0,\ldots,M$, we have
  \begin{align}
    |u^{n+1}|_{\m{BV01}} &= \sum_{j=0}^{N-1}|u_{j+1}^{n+1}-u_{j}^{n+1}|\notag\\
    &=|u_1^{n+1}-u_0^{n+1}| + \sum_{j=1}^{N-2}|u_{j+1}^{n+1}-u_{j}^{n+1}| + |u_N^{n+1}-u_{N-1}^{n+1}|.\label{BV bound first step}
  \end{align}
The scheme \eqref{numscheme} can then be written in conservative form, i.e. for $j=1,\ldots, N-1$ we have
\begin{equation*}
  u_j^{n+1} = u_j^n + C_{j+\frac{1}{2}}^n(u_{j+1}^n -u_j^n) - D_{j-\frac{1}{2}}^n (u_j^n-u_{j-1}^n) +\gamma \Delta t P_j^n,
\end{equation*}
where
\begin{align*}
  C_{j+\frac{1}{2}}^n &= \lambda\frac{f(u_j^n) - F_{j+\frac{1}{2}}^n}{u_{j+1}^n -u_j^n} & \text{for }j&=1,\ldots,N-1\\
  D_{j+\frac{1}{2}}^n &= \lambda\frac{f(u_{j+1}^n) - F_{j+\frac{1}{2}}^n}{u_{j+1}^n -u_j^n} &\text{for }j&=0,\ldots,N-2.
\end{align*}
Using the consistency of the numerical flux and the mean value theorem, we get
\begin{align*}
	C_{j+\frac{1}{2}}^n &= \lambda \frac{F(u_j^n,u_j^n) - F(u_j^n,u_{j+1}^n)}{u_{j+1}^n - u_j^n}\\
	&= -\lambda F'_2(\xi) \geq 0
	\intertext{and similarly}
	D_{j+\frac{1}{2}}^n &= \lambda \frac{F(u_{j+1}^n,u_{j+1}^n) - F(u_j^n,u_{j+1}^n)}{u_{j+1}^n - u_j^n}\\
	&= \lambda F'_1(\zeta)\geq 0
\end{align*}
for all $j$ in $\{1,\ldots,N-1\}$ and $\{0,\ldots,N-2\}$ respectively. Furthermore, using the mean value theorem on the difference $F_1-F_2$ a similar calculation shows that the CFL condition \eqref{properties of discrete flux II} assures $C_{j+\frac{1}{2}}^n+D_{j+\frac{1}{2}}^n \leq 1$ for $j=1,\ldots,N-2$.
% \deleted[id=A]{The CFL condition \eqref{properties of discrete flux II} assures $0\leq C_{j+\frac{1}{2}}^n\leq 1$ for $j=1,\ldots,N-1$, $0\leq D_{j+\frac{1}{2}}^n\leq 1$ for $j=0,\ldots,N-2$ and $C_{j+\frac{1}{2}}^n+D_{j+\frac{1}{2}}^n\leq 1$ for $j=1,\ldots,N-2$.}
Now, regarding the sum on the right hand side of \eqref{BV bound first step}, we can estimate
\begin{multline*}
|u_{j+1}^{n+1}-u_j^{n+1}| \leq\\
\left|u_{j+1}^n -u_j^n +C_{j+\frac{3}{2}}^n (u_{j+2}^n-u_{j+1}^n) -(D_{j+\frac{1}{2}}^n+C_{j+\frac{1}{2}}^n)(u_{j+1}^n-u_j^n) + D_{j-\frac{1}{2}}^n(u_j^n-u_{j-1}^n)\right|\\
  +\gamma\Delta t |P_{j+1}^n-P_j^n|
  % \sum_{j=1}^{N-2} \left|u_{j+1}^n -u_j^n +C_{j+\frac{3}{2}}^n (u_{j+2}^n-u_{j+1}^n) -D_{j+\frac{1}{2}}^n(u_{j+1}^n-u_j^n)-C_{j+\frac{1}{2}}^n(u_{j+1}^n-u_j^n) + D_{j-\frac{1}{2}}^n(u_j^n-u_{j-1}^n)\right| \\
  % +\sum_{j=1}^{N-2}\gamma\Delta t |P_{j+1}^n-P_j^n|
\end{multline*}
Regarding the first sum, we get
\begin{align*}
  &\sum_{j=1}^{N-2}\left|u_{j+1}^n -u_j^n +C_{j+\frac{3}{2}}^n (u_{j+2}^n-u_{j+1}^n) -(D_{j+\frac{1}{2}}^n+C_{j+\frac{1}{2}}^n)(u_{j+1}^n-u_j^n) + D_{j-\frac{1}{2}}^n(u_j^n-u_{j-1}^n)\right|\\
&\leq \sum_{j=1}^{N-2} C_{j+\frac{3}{2}}^n|u_{j+2}^n-u_{j+1}^n| + \sum_{j=1}^{N-2}(1-D_{j+\frac{1}{2}}^n-C_{j+\frac{1}{2}}^n)|u_{j+1}^n-u_j^n| + \sum_{j=1}^{N-2}D_{j-\frac{1}{2}}^n|u_j^n-u_{j-1}^n|\\
&= \sum_{j=2}^{N-1} C_{j+\frac{1}{2}}^n|u_{j+1}^n-u_{j}^n| + \sum_{j=1}^{N-2}(1-D_{j+\frac{1}{2}}^n-C_{j+\frac{1}{2}}^n)|u_{j+1}^n-u_j^n| + \sum_{j=0}^{N-3}D_{j+\frac{1}{2}}^n|u_{j+1}^n-u_{j}^n|\\
&= \sum_{j=1}^{N-2}|u_{j+1}^n-u_j^n| -C_{\frac{3}{2}}^n|u_2^n-u_1^n| + C_{N-\frac{1}{2}}^n|u_N^n-u_{N-1}^n| -D_{N-\frac{3}{2}}^n|u_{N-1}^n-u_{N-2}^n| + D_{\frac{1}{2}}^n|u_1^n-u_0^n|
\end{align*}
On the other hand, regarding the boundary terms in \eqref{BV bound first step}, since $D_{\frac{1}{2}}^n\leq 1$ and \eqref{P bound}, we find
\begin{align*}
  |u_1^{n+1}- & u_0^{n+1}| \\
  &\leq |u_1^n - u_0^{n+1}-\lambda (F_{\frac{3}{2}}-F_{\frac{1}{2}})| +\gamma \Delta t|P_1^n|\\
  &\leq |u_0^{n+1}-u_0^n| + |u_1^n-u_0^n - \lambda(F_{\frac{3}{2}}-f(u_1^n)-(F_{\frac{1}{2}}-f(u_1^n)))| + \gamma \Delta t \norm*{\infty}{u^n}\\
  &= |u_0^{n+1}-u_0^n| + |u_1^n-u_0^n + C_{\frac{3}{2}}^n(u_2^n-u_1^n)-D_{\frac{1}{2}}^n(u_1^n-u_0^n)|+ \gamma \Delta t \norm*{\infty}{u^n}\\
  &\leq |u_0^{n+1}-u_0^n| + C_{\frac{3}{2}}^n|u_2^n-u_1^n| + (1-D_{\frac{1}{2}}^n)|u_1^n-u_0^n|+ \gamma \Delta t \norm*{\infty}{u^n}
  \intertext{and similarly}
  |u_N^{n+1}- & u_{N-1}^{n+1}| \\
  &\leq |u^{n+1}_N-u^n_N| +(1-C_{N-\frac{1}{2}}^n)|u_N^n-u_{N-1}^n| + D_{N-\frac{3}{2}}^n |u^n_{{N-1}-u^n_{N-2}}|  + \gamma \Delta t \norm*{\infty}{u^n}
\end{align*}
% and a similar estimate for $|u_N^{n+1}-u_{N-1}^{n+1}|$.
Moreover, we will estimate the $P$ term with the help of Lemma \eqref{Linfty bound} as follows
  \begin{align*}
    \gamma \Delta t \sum_{j=1}^{N-2}|P_{j+1}^n-P_j^n| &= \gamma \Delta t \frac{\Delta x}{2}\sum_{j=1}^{N-2} |u_{j+1}^n+u_j^n|\\
    &\leq \gamma \Delta t (\Delta x N) \norm*{\infty}{u^n}\\
    &\leq\gamma \Delta t e^{\gamma T}\left(\norm*{\infty}{u^0}+\norm*{\infty}{\alpha}+\norm*{\infty}{\beta}\right).
  \end{align*}
In summary we get
\begin{align*}
  |u^{n+1} & |_{\m{BV01}} \\
  &\leq |u^n|_{\m{BV01}} + |u_0^{n+1}-u_0^n| + |u_N^{n+1}-u_N^n| + \gamma \Delta t e^{\gamma T}\left(\norm*{\infty}{u^0}+\norm*{\infty}{\alpha}+\norm*{\infty}{\beta}\right).
\end{align*}
Furthermore, we note that
\begin{equation*}
  \sum_{n=0}^M |u_0^{n+1}-u_0^n| \leq |\alpha|_{\m{BV0T}},
\end{equation*}
and similarly for the right boundary.
Therefore we get
\begin{equation*}
  |u^{n}|_{\m{BV01}} \leq C_T \left(|u^0|_{\m{BV01}} + |\alpha|_{\m{BV01}} + |\beta|_{\m{BV01}} + \norm*{\infty}{u^0} + \norm*{\infty}{\alpha} + \norm*{\infty}{\beta}\right).
\end{equation*}
\end{proof}
Lastly, we have a bound on the discrete time derivative of $u_{\Delta t}$.
\begin{lemma}[Bound of the time derivative]\label{time derivative bound}
  For $n\Delta t\leq T$, the solution of the numerical scheme \eqref{numscheme} satisfies
  \begin{equation*}
    \Delta x \sum_{j=0}^{N}\left| D_+^t u_j^n\right| \leq C_{\lambda}\left(|u^0|_{\m{BV01}}+\norm*{\infty}{u^0}+\norm*{\infty}{\alpha}+\norm*{\infty}{\beta}\right),
  \end{equation*}
  where $C_{\lambda}$ depends on $\gamma, T$, the Lipschitz constant of the discrete flux $F$ and $\lambda$.
\end{lemma}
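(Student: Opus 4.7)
The plan is to split the sum $\sum_{j=0}^N|\Dplust u_j^n|$ into the interior indices $j=1,\ldots,N-1$ and the two boundary indices $j=0,N$, since the scheme acts differently on each group. Because $\Delta x\,|\Dplust u_j^n| = \lambda^{-1}|u_j^{n+1}-u_j^n|$, it suffices to bound $\sum_{j=0}^N|u_j^{n+1}-u_j^n|$ and absorb the resulting $1/\lambda$ factor into the constant $C_\lambda$.

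For the interior indices I use the scheme \eqref{numscheme} directly to write
$$u_j^{n+1}-u_j^n = -\lambda\bigl(F_{j+\frac{1}{2}}^n - F_{j-\frac{1}{2}}^n\bigr) + \gamma\Delta t\,P_j^n.$$
The flux difference is estimated by Lipschitz continuity of $F$,
$$\bigl|F_{j+\frac{1}{2}}^n - F_{j-\frac{1}{2}}^n\bigr| \leq \mathrm{Lip}(F)\bigl(|u_{j+1}^n-u_j^n| + |u_j^n-u_{j-1}^n|\bigr),$$
so that, after summing over $j=1,\ldots,N-1$, the contribution is controlled by a constant (depending on $\mathrm{Lip}(F)$) times $|u^n|_{\m{BV01}}$. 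For the source term I invoke the pointwise estimate \eqref{P bound}, which together with the identity $\Delta t\,N = \lambda$ yields $\gamma\Delta t\sum_{j=1}^{N-1}|P_j^n| \leq \gamma\lambda\norm*{\infty}{u^n}$.

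For the two boundary indices I exploit that $u_0^{n+1}$ is the time average of $\alpha$ over $[t^n,t^{n+1}]$ (and analogously for $u_N^{n+1}$), which gives the crude bound $|u_0^{n+1}-u_0^n|\leq 2\norm*{\infty}{\alpha}$ for $n\geq 1$; for $n=0$ the definition of $u_0^0$ as a cell average of $u_0$ produces instead a $\norm*{\infty}{u^0}$ contribution. Dividing these boundary differences by $\lambda$ is precisely where the $\lambda$-dependence of the final constant arises. Putting the interior and boundary estimates together and invoking Lemma \ref{Linfty bound} and Lemma \ref{BV bound} to replace $\norm*{\infty}{u^n}$ and $|u^n|_{\m{BV01}}$ by bounds in terms of the data $u^0$, $\alpha$, $\beta$ yields the stated inequality.

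The argument is essentially routine and does not contain any genuine obstacle; the only point that requires care is the non-conservative, separate treatment of the two boundary cells. Their Dirichlet updates are not absorbed by the telescoping of the flux differences and are therefore responsible for the explicit $1/\lambda$ factor in $C_\lambda$, whereas the interior contribution already comes with a good scaling and merely passes the BV control of $u^n$ through $\mathrm{Lip}(F)$.
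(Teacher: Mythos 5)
Your proof is correct and follows essentially the same route as the paper: split off the two Dirichlet cells, bound the interior time differences via the Lipschitz continuity of $F$ and the estimate \eqref{P bound} on $P_j^n$, and close with Lemmas \ref{Linfty bound} and \ref{BV bound}. The only (harmless) difference is at the boundary cells, where you control $|u_0^{n+1}-u_0^n|$ and $|u_N^{n+1}-u_N^n|$ by $\norm*{\infty}{\alpha}$ and $\norm*{\infty}{\beta}$ while the paper uses $|\alpha|_{\m{BV0T}}$ and $|\beta|_{\m{BV0T}}$; both suffice, and yours in fact matches the constants listed in the statement more closely.
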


\begin{proof}
  Using the definition of the numerical scheme \eqref{numscheme}, the Lipschitz continuity of $F$, the $\m{Linfty}$ bound for $P$ as seen in \eqref{P bound}, and the BV and $\m{Linfty}$ bounds of $u^n$ from Lemma \ref{BV bound} and \ref{Linfty bound}, we get
  \begin{align*}
    \Delta x & \sum_{j=0}^{N} |D_+^t u_j^n| = \Delta x \sum_{j=1}^{N-1}|D_+^tu_j^n| + \Delta x|D_+^t u_0^n| + \Delta x |D_+^t u_N^n|\\
    &\leq \Delta x \sum_{j=1}^{N-1} |D_{-}F(u_j^n,u_{j+1}^n)| + \gamma \Delta x \sum_{j=1}^{N-1} |P_j^n| + \frac{1}{\lambda}\left(|\alpha|_{\m{BV01}}+|\beta|_{\m{BV01}}\right)\\
    &\leq C\Delta x \sum_{j=1}^{N-1}\left(|D_{-}u_j^n|+|D_{-}u_{j+1}^n|\right) + \gamma \Delta x N \norm*{\infty}{u^n}+ \frac{1}{\lambda}\left(|\alpha|_{\m{BV01}}+|\beta|_{\m{BV01}}\right)\\
    &\leq C\left(|u^n|_{\m{BV01}} +\norm*{\infty}{u^n}\right)+ \frac{1}{\lambda}\left(|\alpha|_{\m{BV01}}+|\beta|_{\m{BV01}}\right)\\
    &\leq C_{\lambda}\left(|u^0|_{\m{BV01}} + |\alpha|_{\m{BV01}} + |\beta|_{\m{BV01}} + \norm*{\infty}{u^0} + \norm*{\infty}{\alpha} + \norm*{\infty}{\beta}\right)
  \end{align*}
\end{proof}

With the help of these three bounds we finally can apply a version of Helly's theorem to show compactness of the scheme.

\begin{lemma}[Convergence]\label{lem: convergence}
  Let $u_{\Delta t}$ be the family of solutions of the numerical scheme \eqref{numscheme} defined by $u_{\Delta t}(x,t)=u_j^n$ for $(x,t)\in [x_{j-\frac{1}{2}},x_{j+\frac{1}{2}})\times [t^n,t^{n+1})$. Further, let $\lambda=\frac{\Delta t}{\Delta x}$ be fixed such that the discrete flux satisfies \eqref{properties of discrete flux I} and \eqref{properties of discrete flux II}. Then there is a sequence $\Delta t_k$ and a function $u\in\m{Lip0TL1}$ such that $\Delta t_k\to 0$ and $u_{\Delta t_k}$ converges to $u$ in $\m{C0TL1}$.
\end{lemma}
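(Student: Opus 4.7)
The plan is to apply a standard version of Helly's theorem (or equivalently a Kolmogorov--Riesz type compactness argument together with Arzelà--Ascoli) to the family $\{u_{\Delta t}\}$ viewed as a subset of $\mathcal{C}([0,T];\mathrm{L}^1(0,1))$. The three a priori estimates established in Lemmas~\ref{Linfty bound}, \ref{BV bound} and \ref{time derivative bound} provide exactly the ingredients needed.

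First, I would check that for each fixed time $t\in[0,T]$ the slice $u_{\Delta t}(\cdot,t)$ lies in a compact subset of $\mathrm{L}^1(0,1)$. This follows from the $\m{Linfty}$ bound (Lemma~\ref{Linfty bound}) combined with the BV bound (Lemma~\ref{BV bound}), since $u_{\Delta t}(\cdot,t)$ is piecewise constant on the grid with total variation equal to $|u^n|_{\m{BV01}}$ where $n$ is the index with $t\in I^n$, and $\norm*{\infty}{u_{\Delta t}(\cdot,t)}=\norm*{\infty}{u^n}$. Both bounds are uniform in $\Delta t$ and in $t\in[0,T]$, so by Helly's selection theorem the set $\{u_{\Delta t}(\cdot,t)\}$ is relatively compact in $\mathrm{L}^1(0,1)$ for each $t$.

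Next I would use the time derivative bound (Lemma~\ref{time derivative bound}) to obtain uniform Lipschitz continuity in time in $\mathrm{L}^1(0,1)$. Specifically, for $n\Delta t\leq m\Delta t\leq T$,
\begin{equation*}
  \norm*{\m{L101}}{u_{\Delta t}(\cdot,m\Delta t)-u_{\Delta t}(\cdot,n\Delta t)}
  \leq \sum_{k=n}^{m-1}\Delta x\sum_{j=0}^{N}\Delta t\,|D_+^t u_j^k|
  \leq C_\lambda |m-n|\Delta t,
\end{equation*}
using the telescoping identity $u_j^m-u_j^n=\Delta t\sum_{k=n}^{m-1}D_+^t u_j^k$ and the uniform bound from Lemma~\ref{time derivative bound}. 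For general $t_1<t_2$ in $[0,T]$ this upgrades to $\norm*{\m{L101}}{u_{\Delta t}(\cdot,t_2)-u_{\Delta t}(\cdot,t_1)}\leq C_\lambda(|t_2-t_1|+2\Delta t)$, so the family $\{u_{\Delta t}\}$ is asymptotically equicontinuous from $[0,T]$ into $\mathrm{L}^1(0,1)$.

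Having pointwise (in $t$) relative compactness together with uniform (asymptotic) equicontinuity in time, the Arzelà--Ascoli theorem for functions with values in the metric space $\mathrm{L}^1(0,1)$ yields a subsequence $\Delta t_k\to 0$ and a limit $u\in\mathcal{C}([0,T];\mathrm{L}^1(0,1))$ such that $u_{\Delta t_k}\to u$ uniformly in $t$ with values in $\mathrm{L}^1(0,1)$. Passing to the limit in the time Lipschitz estimate above yields $\norm*{\m{L101}}{u(\cdot,t_2)-u(\cdot,t_1)}\leq C_\lambda |t_2-t_1|$, so in fact $u\in\m{Lip0TL1}$, completing the proof. The only mildly delicate point is the transition from the discrete Lipschitz estimate (which controls only times on the grid) to the continuous one, which is handled by the piecewise constant structure of $u_{\Delta t}$ in $t$ and costs an extra $\Delta t$ term that vanishes in the limit.
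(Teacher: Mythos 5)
Your proposal is correct and follows essentially the same route as the paper: the same three a priori estimates (Lemmas~\ref{Linfty bound}, \ref{BV bound}, \ref{time derivative bound}) feed into the same compactness machinery, with the same telescoping argument for the approximate Lipschitz-in-time bound including the $\mathcal{O}(\Delta t)$ correction. The only cosmetic difference is that you unfold the cited version of Helly's theorem into its two constituents (Helly selection at fixed time plus Arzel\`a--Ascoli in time), whereas the paper invokes the packaged statement \cite[Theorem A.11]{holden2015front} directly.
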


\begin{proof}
  We want to apply Helly's theorem \cite[Theorem A.11]{holden2015front}. This requires an $\m{Linfty}$ bound, a bound on the variation in space that is independent of $\Delta t$, and $\m{L1}$ continuity in time as $\Delta t\to 0$.
  An application of Lemma \ref{Linfty bound} gives
  \begin{align*}
    \norm{Linfty01}{u_{\Delta t}(\cdot, t)}&\leq e^{\gamma T}\left(\norm{Linfty01}{u^0} + \norm*{\infty}{\alpha}+\norm*{\infty}{\beta}\right) \\
    &\leq e^{\gamma T}\left(\norm{Linfty01}{u_0}+\norm*{\infty}{\alpha}+\norm*{\infty}{\beta}\right).
  \end{align*}
  Furthermore, by using Lemma \ref{BV bound}, we find
  \begin{align*}
    \norm{L101}{u_{\Delta t}(\cdot+\eps,t)-u_{\Delta t}(\cdot,t)} &\leq \eps |u_{\Delta t}(\cdot,t)|_{\m{BV01}}\\
    &\leq \eps \left(|u^0|_{\m{BV01}}+C\left(\norm*{\infty}{u^0}+\norm*{\infty}{\alpha}+\norm*{\infty}{\beta}\right)\right)\\
    &\leq \eps \left(|u_0|_{\m{BV01}}+C\left(\norm*{\infty}{u_0}+\norm*{\infty}{\alpha}+\norm*{\infty}{\beta}\right)\right)\\
    &\to 0,\qquad\text{as }\eps\to 0 \text{ uniformly in }\Delta t.
  \end{align*}
  Finally, in order to show continuity in time, we employ Lemma \ref{time derivative bound}. For $t\in[t^n,t^{n+1})$ and $s\in[t^{\overline{n}},t^{\overline{n}+1})$ with $n>\overline{n}$ we find
  \begin{align*}
    \int_0^1 |u_{\Delta t}(x,t)-u_{\Delta t}(x,s)|\dx &= \Delta x \sum_{j=0}^{N}|u_j^n-u_j^{\overline{n}}|\\
    &\leq \Delta x \sum_{l=\overline{n}}^{n-1}\sum_{j=0}^N |u_j^{l+1}-u_j^l|\\
    & = \Delta t \sum_{l=\overline{n}}^{n-1}\Delta x\sum_{j=0}^N |D_{+}^t u_j^l|\\
    &\leq \Delta t (n-\overline{n})C_{\lambda} \left(|u^0|_{\m{BV01}}+\norm*{\infty}{u^0}+\norm*{\infty}{\alpha}+\norm*{\infty}{\beta}\right)\\
    &=(t^n-t^{\overline{n}}) C_{\lambda}\left(|u^0|_{\m{BV01}}+\norm*{\infty}{u^0}+\norm*{\infty}{\alpha}+\norm*{\infty}{\beta}\right)\\
    &\leq C_{\lambda}|t-s| + \mathcal{O}(\Delta t).
  \end{align*}
  An application of Helly's theorem assures the existence of a sequence $\Delta t_k\to 0$  and a function $u\in\m{Lip0TL1}$ such that such that $u_{\Delta t_k}$ converges to $u$ in the space $\m{C0TL1}$ as $k\to\infty$.
\end{proof}

\section{Convergence towards the entropy solution}\label{sec: convergence towards entropy solution}

In this section we prove that the numerical scheme converges to an entropy solution of the Ostrovsky--Hunter equation. This fact hinges on discrete entropy inequalities for the interior of the domain and the boundary. These inequalities require a discrete version of the entropy flux that is consistent with the numerical flux function \eqref{numerical flux function}:
\begin{equation}
  Q_{j+\frac{1}{2}}^n = Q(u_j^n,u_{j+1}^n),\qquad Q_{j-\frac{1}{2}}^n = Q(u_{j-1}^n,u_j^n), \label{discrete entropy flux}
\end{equation}
where
\begin{equation*}
  Q(u,v) = \int_c^u \eta'(z)F'_1(z)\dz + \int_c^v \eta'(z)F'_2(z)\dz,
\end{equation*}
and $c\in\R$ is an arbitrary constant. Note that since $F_1$ and $F_2$ are Lipschitz continuous, and if $\eta'$ is bounded, also $Q$ is Lipschitz continuous in both variables.

We will now derive discrete versions of the entropy conditions \eqref{entropy condition} and \eqref{entropy bc}. The entropy condition in the interior of the domain has already been proven in \cite{coclite2017convergent}.

\begin{lemma}[Discrete Entropy inequalities]\label{lem: discrete entropy conditions}
  For any convex entropy $\eta\in\m{C2R}$ with entropy flux $q$ given by $q'=\eta'f'$, let $Q_{j+\frac{1}{2}}^n$ and $Q_{j-\frac{1}{2}}^n$ be defined by \eqref{discrete entropy flux}. Then the solutions of the scheme satisfies for each $n$
  \begin{equation}
    \Dplust \eta_j^n +  \Dminus Q_{j+\frac{1}{2}}^n - \gamma \eta_j^{\prime,n+1}P_j^n \leq 0 \label{discrete entropy condition}
  \end{equation}
  for $j=1,\ldots,N-1$, as well as
  \begin{equation}
    Q_{\frac{1}{2}}^n - q(u_0^n) - \eta'(u_0^n)(F_{\frac{1}{2}}^n - f(u_0^n)) \leq 0 \label{discrete entropy bc}
  \end{equation}
  and
  \begin{equation*}
    Q_{N-\frac{1}{2}}^n - q(u_N^n) - \eta'(u_N^n)(F_{N-\frac{1}{2}}^n - f(u_N^n)) \geq 0.
  \end{equation*}
\end{lemma}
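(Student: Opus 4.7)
The plan is to treat the three inequalities separately: the interior case is essentially the same as in the periodic setting and can be imported from \cite{coclite2017convergent}, so I would only sketch its essentials for completeness; the two boundary inequalities are the new content and need a direct argument.

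For the interior inequality \eqref{discrete entropy condition}, I would split the update as $u_j^{n+1} = v_j^{n+1} + \gamma \Delta t\, P_j^n$, where $v_j^{n+1} := u_j^n - \lambda (F_{j+\frac{1}{2}}^n - F_{j-\frac{1}{2}}^n)$ is the pure conservation-law step. The map $(u_{j-1},u_j,u_{j+1}) \mapsto v_j^{n+1}$ is monotone under the CFL condition \eqref{properties of discrete flux II} and consistent via \eqref{properties of discrete flux I}, so the standard Crandall--Majda cell entropy inequality yields $\eta(v_j^{n+1}) - \eta(u_j^n) + \lambda(Q_{j+\frac{1}{2}}^n - Q_{j-\frac{1}{2}}^n) \leq 0$. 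Then convexity of $\eta$ gives $\eta(u_j^{n+1}) - \eta(v_j^{n+1}) \leq \eta'(u_j^{n+1})(u_j^{n+1} - v_j^{n+1}) = \gamma \Delta t\, \eta'(u_j^{n+1})\,P_j^n$, and adding these two bounds and dividing by $\Delta t$ produces \eqref{discrete entropy condition}.

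For the left boundary inequality \eqref{discrete entropy bc}, my plan is to collapse every term to a single integral in a dummy variable $z$ over the interval between $u_0^n$ and $u_1^n$, and then read off the sign from convexity of $\eta$ together with the monotonicity assumption \eqref{properties of discrete flux I}. Consistency $F(u,u)=f(u)$ implies $f'(z) = F_1'(z) + F_2'(z)$, so the $F_1$-parts of $Q_{\frac{1}{2}}^n$ and $q(u_0^n)$ cancel, leaving
\[
Q_{\frac{1}{2}}^n - q(u_0^n) = \int_{u_0^n}^{u_1^n} \eta'(z)\,F_2'(z)\,\dz, \qquad F_{\frac{1}{2}}^n - f(u_0^n) = \int_{u_0^n}^{u_1^n} F_2'(z)\,\dz.
\]
Substituting back, the claim reduces to $\int_{u_0^n}^{u_1^n} (\eta'(z) - \eta'(u_0^n))\,F_2'(z)\,\dz \leq 0$. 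Since $\eta$ is convex so $\eta'$ is nondecreasing, and $F_2' \leq 0$, the integrand has pointwise sign opposite to $z - u_0^n$, which forces the oriented integral to be non-positive whether $u_1^n > u_0^n$ or $u_1^n < u_0^n$.

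The right boundary inequality is handled symmetrically: now the $F_2$-parts cancel and one is reduced to
\[
\int_{u_N^n}^{u_{N-1}^n} (\eta'(z) - \eta'(u_N^n))\,F_1'(z)\,\dz \geq 0,
\]
which holds because $F_1' \geq 0$ and $\eta'$ is nondecreasing. The only genuine subtlety is the sign bookkeeping when the limits of integration end up reversed; that is the step I expect to need the most care, but beyond it the argument is purely algebraic and hinges only on the splitting $F = F_1 + F_2$ together with the sign assumptions in \eqref{properties of discrete flux I}.
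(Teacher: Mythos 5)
Your proof is correct and follows essentially the same route as the paper: the boundary inequalities are reduced to the single oriented integral $\int_{u_0^n}^{u_1^n}(\eta'(z)-\eta'(u_0^n))F_2'(z)\,\dz$ (resp.\ its $F_1$ counterpart at $x=1$), whose sign the paper extracts via $\eta'(z)-\eta'(u_0^n)=\eta''(\xi)(z-u_0^n)$ while you use monotonicity of $\eta'$ directly --- an immaterial difference. For the interior inequality the paper simply cites \cite{coclite2017convergent}, and your Crandall--Majda-plus-convexity sketch is exactly the standard argument behind that citation.
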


\begin{proof}
  The first inequality is derived in \cite[Lemma 5]{coclite2017convergent} (see also \cite[Lemma 6.1]{karlsen2014L1}). For the second inequality we use a Taylor approximation and the convexity of the flux
  \begin{align*}
    Q_{\frac{1}{2}}^n - q(u_0^n) - & \eta'(u_0^n)(F_{\frac{1}{2}}^n - f(u_0^n)) \\
    &= Q(u_0^n,u_1^n) - Q(u_0^n,u_0^n) - \eta'(u_0^n)(F(u_0^n,u_1^n) - F(u_0^n,u_0^n))\\
    &= \int_c^{u_1^n}\eta'(z)F_2'(z)\dz -\int_c^{u_0^n}\eta'(z)F_2'(z)\dz - \eta'(u_0^n)(F_2(u_1^n) - F_2(u_0^n))\\
    &=\int_{u_0^n}^{u_1^n} \eta'(z)F'_2(z)\dz - \int_{u_0^n}^{u_1^n}\eta'(u_0^n)F'_2(z)\dz\\
    &= \int_{u_0^n}^{u_1^n}\eta''(\xi)(z-u_0^n)F'_2(z)\dz \\
    &=\sign(u_1^n-u_0^n)\int_{\min(u_0^n,u_1^n)}^{\max(u_0^n,u_1^n)} \eta''(\xi) (z-u_0^n) F'_2(z) \dz\\
    &=\int_{\min(u_0^n,u_1^n)}^{\max(u_0^n,u_1^n)} \eta''(\xi) |z-u_0^n| F'_2(z) \dz\leq 0
  \end{align*}
  The proof of the third inequality can be done analogously.
\end{proof}

Thus far, we only know that a sequence of solutions of the numerical scheme \eqref{numscheme} converges to some $u\in\m{C0TL1}$. By passing to the limit in the discrete entropy conditions of Lemma \ref{lem: discrete entropy conditions} we can now show that $u$ is in fact an entropy solution. To accomplish that we will employ similar techniques as in \cite{coclite2017convergent} in regards to the entropy condition and as in \cite{s2006weak} in regards to the entropy boundary condition.
While the following theorem only provides the convergence of a subsequence of $u_{\Delta t}$, the uniqueness result in Section \ref{sec: uniqueness} ensures that the whole sequence converges to the unique entropy solution.
\begin{theorem}[Convergence towards the entropy solution]\label{thm: conv towards entropy solution}
  Let $u_0\in\m{BV01}$ and $\alpha,\beta\in\m{BV0T}$ and fix $\lambda =\frac{\Delta t}{\Delta x}$ such that the discrete flux in the scheme defined by \eqref{numscheme} satisfies the \eqref{properties of discrete flux I} and \eqref{properties of discrete flux II}. Then for any sequence $(\Delta t_n)_n$ such that $\Delta t_n\to 0$, there is a subsequence $\Delta t_{n_k}$ such that the piecewise constant interpolations $u_{\Delta t_{n_k}}$ defined by the scheme \eqref{numscheme} converge in $\m{C0TL1}$ towards an entropy solution of the Ostrovsky--Hunter equation as $k\to\infty$.
\end{theorem}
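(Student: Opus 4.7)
The plan is to combine the compactness provided by Lemma \ref{lem: convergence} with a passage to the limit in the discrete entropy inequalities of Lemma \ref{lem: discrete entropy conditions}. By Lemma \ref{lem: convergence}, I extract a subsequence, still denoted $u_{\Delta t_{n_k}}$, converging in $\m{C0TL1}$ to some $u \in \m{Lip0TL1}$, which by Lemma \ref{Linfty bound} also lies in $\m{Linfty0T01}$. Since $P$ is defined by integration in $x$ and $u_{\Delta t_{n_k}}(\cdot,t)\to u(\cdot,t)$ in $\m{L101}$ uniformly in $t$, the piecewise constant interpolation of $P_j^n$ converges uniformly on $[0,1]\times[0,T]$ to $P[u]$.

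To establish \eqref{entropy condition}, I fix a nonnegative $\phi \in \m{Ccinfty01R}$, multiply \eqref{discrete entropy condition} by $\phi(x_j, t^n)\Delta x\Delta t$, and sum over $n = 0,\ldots,M$ and $j = 1,\ldots,N-1$. Discrete summation by parts in time transfers $\Dplust$ onto $\phi$, producing a term that converges to $-\int_0^T\!\!\int_0^1 \eta(u)\phi_t\,\dx\dt$ together with boundary contributions $-\int_0^1\eta(u_0)\phi(x,0)\,\dx + \int_0^1\eta(u(\cdot,T))\phi(\cdot,T)\,\dx$. The $\Dminus$ on the entropy flux transfers to $\phi_x$, giving $-\int_0^T\!\!\int_0^1 q(u)\phi_x\,\dx\dt$ (spatial boundary terms vanish since $\phi$ has compact support in $(0,1)$), and the source term passes to $-\gamma\int_0^T\!\!\int_0^1\eta'(u)P[u]\phi\,\dx\dt$ by the uniform convergence of the $P$-interpolant, the continuity of $\eta'$ on the uniformly bounded range of $u_{\Delta t}$, and dominated convergence. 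Rearranging produces \eqref{entropy condition}.

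For the boundary condition \eqref{entropy bc}, the remark following Definition \ref{def: entropy solution} together with \cite[Theorem 1.1]{COCLITE20093823} guarantees the existence of strong traces $u_0^\tau,u_1^\tau$ of $u$. By construction, the time-interpolated sequences of $u_0^n$ and $u_N^n$ converge to $\alpha$ and $\beta$ in $\m{L10T}$, while the interior near-boundary columns $u_1^n$ and $u_{N-1}^n$ need to be identified in the limit with $u_0^\tau$ and $u_1^\tau$ in $\m{L10T}$ along a further subsequence; this identification is the principal technical obstacle and is carried out along the lines of \cite{s2006weak}, using the $\m{C0TL1}$ convergence together with the uniform spatial BV bound of Lemma \ref{BV bound} to rule out a loss of mass in the first layer. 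Once this is established, multiplying \eqref{discrete entropy bc} by an arbitrary nonnegative $\psi \in \m{Cc10T}$, integrating in time, and passing to the limit using the Lipschitz continuity of $Q$, $q$, $F$, $f$ and $\eta'$ gives the first inequality in \eqref{entropy bc}; the argument at $x=1$ is symmetric. Combining both parts yields that $u$ is an entropy solution in the sense of Definition \ref{def: entropy solution}.
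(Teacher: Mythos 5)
Your treatment of the interior entropy inequality \eqref{entropy condition} matches the paper's argument (cell-averaged test function, discrete summation by parts, Lipschitz continuity of $Q$ and $q$, uniform convergence of the discrete $P$), and that part is fine. The problem is the boundary condition. You reduce everything to the claim that the first interior column $u_1^n$ (interpolated in time) converges to the strong trace $u_0^{\tau}$ in $\m{L10T}$, and you propose to prove this from the $\m{C0TL1}$ convergence plus the uniform BV bound "to rule out a loss of mass in the first layer." This step is a genuine gap: convergence in $\m{C0TL1}$ gives no control over the values of $u_{\Delta t}$ on the single cell $I_1$, whose measure is $\Delta x\to 0$, and the uniform bound on $|u^n|_{\m{BV01}}$ does not prevent an $O(1)$ jump concentrated in that cell. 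Monotone schemes with Dirichlet data generically produce exactly such a numerical boundary layer, so the identification $u_1^n\to u_0^{\tau}$ is in general false, and no further subsequence extraction repairs it.

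The paper circumvents this entirely. Starting from the rearranged interior inequality $Q_{j+\frac{1}{2}}^n \leq Q_{j-\frac{1}{2}}^n - \Delta x\, \Dplust\eta_j^n + \gamma\Delta x\, P_j^n\eta_j^{\prime,n+1}$, it telescopes from an interior index $j$ down to the boundary and only then invokes the discrete boundary entropy inequality \eqref{discrete entropy bc}, obtaining \eqref{discrete almost entropy bc}. The limit $\Delta t\to 0$ is taken at a \emph{fixed} interior location $x$, where the $\m{L101times0T}$ convergence does give the needed a.e.-in-$x$ information; the numerical boundary flux $F_{\frac{1}{2}}^n$ is not identified with anything at this stage but merely extracted as a weak-$*$ limit $\widetilde{f}_0$ in $\m{Linfty0T}$. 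The limit $x\to 0$ is then performed using the strong trace of the limit function $u$ (which exists by \cite[Theorem 1.1]{COCLITE20093823}), and only afterwards is $\widetilde{f}_0 = f(u_0^{\tau})$ recovered, by testing \eqref{almost entropy bc} with Kru\v{z}kov entropies for $k$ above and below both $u_0^{\tau}(t)$ and $\alpha(t)$. You would need to replace your trace-identification step with an argument of this type (or an equivalent one); as written, the boundary half of your proof does not go through.
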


\begin{proof}
  Let $(u_{\Delta t_{n_k}})$ be a sequence of approximate solutions that converges to $u$ in the space $\m{C0TL1}$ as $\Delta t_{n_k}\to 0$ (cf. Lemma \ref{lem: convergence}). For simplicity, we will omit any indices on $\Delta t$.
  According to Lemma \ref{lem: discrete entropy conditions}, the function $u_{\Delta t}$ satisfies the discrete entropy and entropy boundary conditions.
  
  First, we show that $u$ satisfies the entropy condition \eqref{entropy condition}. 
  Multiplying the discrete entropy condition \eqref{discrete entropy condition} by $\Delta t\Delta x \phi_j^n$, where $\phi_j^n = \frac{1}{\Delta t\Delta x}\iint_{I_j^n}\phi(x,t)\dx\dt$ for some nonnegative test function $\phi\in\m{Ccinfty01R}$, and taking the sum over $n=0,\ldots,M$ and $j=1,\ldots,N-1$ gives
  \begin{align}
    0 &\geq \Delta t\Delta x \sum_{n=0}^M\sum_{j=1}^{N-1} \left(\phi_j^n \Dplust \eta_j^n + \phi_j^n\Dminus Q_{j+\frac{1}{2}}^n -\gamma \phi_j^n \eta_j^{\prime,n+1}P_j^n \right) \notag\\
    &= \Delta x\sum_{j=1}^{N-1}(\phi_j^{M+1}\eta_j^{M+1}-\phi_j^0\eta_j^0) -\Delta t\Delta x \sum_{n=0}^M\sum_{j=1}^{N-1} \eta_j^{n+1}\Dplust\phi_j^n \label{discrete weak entropy condition}\\
    &\qquad  % +\Delta t \sum_{n=0}^M(\phi^n_N Q_{N-\frac{1}{2}}^n - \phi_0^n Q_{\frac{1}{2}}^n) 
    - \Delta t\Delta x \sum_{n=0}^M\sum_{j=1}^{N} Q_{j-\frac{1}{2}}^n \Dminus\phi_j^n
    -\gamma \Delta t\Delta x \sum_{n=0}^M\sum_{j=1}^{N-1} \phi_j^n\eta_j^{\prime,n+1}P_j^n,\notag
  \end{align}
  where we have used that $\phi_0^n=\phi_N^n=0$ for $\Delta x$ small enough.
  As in \cite{coclite2017convergent} we can pass to the limit $\Delta t\to 0$ in inequality \eqref{discrete weak entropy condition}.

  More precisely, the continuity of $\eta$ and the convergence of $u_{\Delta t}$ imply that $\eta(u_{\Delta t})$ converges to $\eta(u)$ in $\m{C0TL1}$. On the other hand, since both the numerical and continuous entropy fluxes are Lipschitz continuous and $u_{\Delta t}(\cdot,t)$ has bounded variation for all $t\in[0,T]$, we find
  \begin{align*}
    \sum_{n=0}^M\sum_{j=1}^N \iint_{I_j^n} & \left|Q_{j-\frac{1}{2}}^n-q(u(x,t))\right|\dx\dt \\
    &\leq \sum_{n=0}^M\sum_{j=1}^N \iint_{I_j^n}\left(|Q_{j-\frac{1}{2}}^n- q(u_j^n)| +|q(u_j^n)-q(u(x,t))|\right)\dx\dt \\
    &\leq C\sum_{n=0}^M\sum_{j=1}^N \iint_{I_j^n}\left(|u_{j-1}^n-u_j^n| +|u_j^n-u(x,t)|\right)\dx\dt \\
    & \leq C_T \Delta x + C \int_0^T\int_0^1 |u_{\Delta t}-u| \dx\dt \to 0.
  \end{align*}
  Finally, the $\m{L1}$ convergence of $u_{\Delta t}$ implies $\m{Linfty}$ convergence of the P term, since for $x\in I_j$ we have
  \begin{align*}
    |P_j^n - P[u](x,t)| &= \left| \Delta x\left( \sum_{i=0}^{j-1}u_i^n +\frac{1}{2}u_j^n \right) -\int_0^x u(y,t)\dy \right|\\
    &\leq \int_0^x |u_{\Delta t}(y,t)-u(y,t)|\dy +C\Delta x \norm{Linfty01}{u_{\Delta t}(\cdot,t)}\\
    &\leq \norm{L101}{u_{\Delta t}(\cdot,t)-u(\cdot,t)} +C\Delta x \norm{Linfty01}{u_{\Delta t}(\cdot,t)}\to 0.
  \end{align*}
  Thus we can pass to the limit $\Delta t\to 0$ in \eqref{discrete weak entropy condition} and get
  \begin{multline*}
    0\geq \int_0^1 \eta(u(x,T))\phi(x,T)\dx - \int_0^1\eta(u(x,0))\phi(x,0)\dx \\- \int_0^T\int_0^1 \left(\eta(u)\phi_t + q(u)\phi_x + \gamma \eta'(u)P[u] \phi \right) \dx\dt
  \end{multline*}
  and therefore $u$ satisfies the entropy condition in the interior of the domain.
  % In view of \cite[Theorem 2]{coclite2017convergent} it suffices to show that the continuous entropy boundary conditions are fulfilled.
  
  Regarding the entropy boundary condition \eqref{entropy bc},
  rearranging \eqref{discrete entropy condition} yields
  \begin{equation*}
    Q_{j+\frac{1}{2}}^n \leq Q_{j-\frac{1}{2}}^n - \Delta x \Dplust \eta_j^n + \gamma \Delta x P_j^n \eta_j^{\prime,n+1}
   \end{equation*}
   Multiplying by $\Delta t \psi^n$, where $\psi^n = \frac{1}{\Delta t}\int_{t^n}^{t^{n+1}}\psi(s)\ds$ for some nonnegative test function $\psi\in\m{Cc10T}$, and summing over $n=0,\ldots,M$, we get
   \begin{align*}
    \Delta t \sum_{n=0}^M & Q_{j+\frac{1}{2}}^n\psi^n \leq \Delta t \sum_{n=0}^M Q_{j-\frac{1}{2}}^n\psi^n - \Delta x\Delta t \sum_{n=0}^M \Dplust\eta_j^n\psi^n + \gamma \Delta x\Delta t\sum_{n=0}^M P_j^n\eta_j^{\prime,n+1} \psi^n\\
    &= \Delta t \sum_{n=0}^M Q_{j-\frac{1}{2}}^n\psi^n + \Delta x\Delta t \sum_{n=0}^M \underbrace{\eta_j^{n+1}}_{\leq \norm*{\infty}{\eta'}\norm*{\infty}{u_{\Delta t}}+C} \underbrace{\Dplust \psi^n}_{\norm*{\infty}{\psi'}} + \gamma \Delta x\Delta t\sum_{n=0}^M P_j^n\eta_j^{\prime,n+1} \psi^n\\
    &\leq \Delta t \sum_{n=0}^M Q_{j-\frac{1}{2}}^n\psi^n + CT\Delta x + \gamma \Delta x\Delta t\sum_{n=0}^M P_j^n\eta_j^{\prime,n+1} \psi^n.
   \end{align*}
   Repeating this argument and using the discrete entropy boundary condition \eqref{discrete entropy bc} yields
   \begin{align}
   \begin{split}
    \Delta t & \sum_{n=0}^M Q_{j+\frac{1}{2}}^n\psi^n \leq \Delta t\sum_{n=0}^M Q_{\frac{1}{2}}^n \psi^n + jCT\Delta x + \gamma \Delta x\Delta t \sum_{i=1}^j\sum_{n=0}^M P_i^n \eta_i^{\prime,n+1}\psi^n\\
    &\leq \Delta t \sum_{n=0}^M (q(u_0^n)+\eta'(u_0^n)(F_{\frac{1}{2}}^n -f(u_0^n)))\psi^n + jCT\Delta x + \gamma \Delta x\Delta t \sum_{i=1}^j\sum_{n=0}^M P_i^n \eta_i^{\prime,n+1}\psi^n. \label{discrete almost entropy bc}
   \end{split}
   \end{align}
   In order to recover the entropy boundary condition \eqref{entropy bc} we now pass to the limit $\Delta t\to 0$ and then $x\to 0$.

   Firstly, since $u_{\Delta t}$ converges to $u$ in $\m{C0TL1}$ and thus also in $\m{L101times0T}$, using the Lipschitz continuity of $Q$, we find
   \begin{align*}
     & \sum_{n=0}^M \sum_{j=0}^N \iint_{I_j^n} |Q_{j+\frac{1}{2}} - q(u(x,t))| \dx\dt \\
     &\leq \sum_{n=0}^M \sum_{j=0}^N \iint_{I_j^n} (|Q(u_j^n,u_{j+1}^n)-Q(u(x,t),u_{j+1}^n)| + |Q(u(x,t),u_{j+1}^n) - q(u(x,t))|) \dx\dt\\
     &\leq C \sum_{n=0}^M \sum_{j=0}^n \iint_{I_j^n} (|u_j^n -u(x,t)|+|u_{j+1}^n-u(x,t)|)\dx\dt\\
     &\leq C \sum_{n=0}^M \sum_{j=0}^n \iint_{I_j^n} (|u_{\Delta t}(x,t) -u(x,t)|+|u_{\Delta t}(x+\Delta x,t) -u_{\Delta t}(x,t)| )\dx\dt\\
     &\leq C \left(\int_0^T\int_0^1 |u_{\Delta t}(x,t)-u(x,t)|\dx\dt + T\Delta x \sup_{0\leq n\leq M+1} |u^n|_{\m{BV01}}\right) \to 0.
   \end{align*}
   Thus the left hand side of \eqref{discrete almost entropy bc} converges to $\int_0^T q(u(x,t))\psi(t)\dt$ for almost every $x\in(0,1)$.

   Because of the Lipschitz continuity of $F$ and the $\m{Linfty}$ bound in Lemma \ref{Linfty bound}, the piecewise constant interpolation in time of the values $F_{\frac{1}{2}}^n$ is bounded in $\m{Linfty0T}$. Thus there exists a subsequence such that $F_{\frac{1}{2}}^n \wkconv*{*}{} \widetilde{f}_0(t)$ in $\m{Linfty0T}$ for some $\widetilde{f}_0\in\m{Linfty0T}$.

   Since $u_0^n = \frac{1}{\Delta t} \int_{t^{n-1}}^{t^n} \alpha(s)\ds$ converges to $\alpha(t)$ for almost all $t\in(0,T)$, the continuity of $q,\eta'$ and $f$ assures convergence of the remaining terms on the right hand side of \eqref{discrete almost entropy bc}.

   Thus, by passing to the limit $\Delta t\to 0$ in \eqref{discrete almost entropy bc}, we get
   \begin{multline*}
    \int_0^T q(u(x,t))\psi(t)\dt \leq \int_0^T \left(q(\alpha(t))+\eta'(\alpha(t))\left(\widetilde{f}_0(t)-f(\alpha(t))\right)\right)\psi(t)\dt + CTx \\
    +\gamma \int_0^x\int_0^T \eta'(u)P[u]\psi(t)\dt\dx.
   \end{multline*}
   Because $u(x,\cdot)$ is of bounded variation in time, we have strong convergence in $\m{L10T}$. The limit can only be the strong trace, i.e. $u(x,\cdot)\to u_0^{\tau}$, as $x\to 0$. Thus, by passing to the limit $x\to 0$ in the foregoing inequality, we get
   \begin{equation}
    \int_0^T q(u_0^{\tau}(t))\psi(t)\dt \leq \int_0^T\left(q(\alpha(t))+\eta'(\alpha(t))\left(\widetilde{f}_0(t)-f(\alpha(t))\right)\right)\psi(t)\dt \label{almost entropy bc}
   \end{equation}
   and since $\psi\in\m{Cc10T}$ is arbitrary
   \begin{equation*}
    q(u_0^{\tau}(t))\leq q(\alpha(t)) + \eta'(\alpha(t))\left(\widetilde{f}_0(t)-f(\alpha(t))\right)
   \end{equation*}
  for almost every $t\in(0,T)$. It remains to show that $\widetilde{f}_0(t) = f(u_0^{\tau}(t))$. By an approximation argument, \eqref{almost entropy bc} also holds true for Kru\v{z}kov entropy pairs $\eta(u)=|u-k|$, $q(u)=\sign(u-k)(f(u)-f(k))$ with arbitrary $k\in\R$. Choosing $k>\max(u_0^{\tau}(t),\alpha(t))$ yields
  \begin{equation*}
    -(f(u_0^{\tau}(t))-f(k)) \leq - (f(\alpha(t))-f(k)) - \left(\widetilde{f}_0(t)-f(\alpha(t))\right)
  \end{equation*}
  and thus
  \begin{equation*}
    f(u_0^{\tau}(t)) \geq \widetilde{f}_0(t).
  \end{equation*}
   On the other hand, choosing $k<\min(u_0^{\tau}(t),\alpha(t))$ gives $f(u_0^{\tau}(t)) \leq \widetilde{f}_0(t)$, and therefore $\widetilde{f}_0(t) = f(u_0^{\tau}(t))$. This proves the entropy boundary condition at $x=0$. The boundary at $x=1$ can be handled similarly.
\end{proof}

\section{$\m{L1}$ stability and uniqueness}\label{sec: uniqueness}

We now want to prove $\m{L1}$ stability of solutions following the `doubling of variables' method introduced by Kru\v{z}kov \cite{kruvzkov1970first}. 

\begin{theorem}[$\m{L1}$ stability]\label{thm: uniqueness}
If $u$ and $v$ are entropy solutions of the Ostrovsky--Hunter equation with initial datum $u_0$ and $v_0$ respectively, then
\begin{equation*}
  \norm{L101}{u(\cdot,T)-v(\cdot,T)} \leq e^{\gamma T} \norm{L101}{u_0-v_0}.
\end{equation*}
In particular, this implies that entropy solutions to the initial-boundary value problem are unique.
\end{theorem}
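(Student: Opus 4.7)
The plan is to adapt Kru\v{z}kov's \emph{doubling of variables} technique to the initial-boundary value problem. Let $u,v$ be entropy solutions with initial data $u_0,v_0$ and common boundary data $\alpha,\beta$. Taking the Kru\v{z}kov pair $\eta(u,k)=|u-k|$, $q(u,k)=\sign(u-k)(f(u)-f(k))$ in \eqref{entropy condition}, I would write the inequality for $u$ in variables $(x,t)$ with parameter $k=v(y,s)$ and, symmetrically, the one for $v$ in variables $(y,s)$ with $k=u(x,t)$, then test each against $\phi(x,t)\rho_\eps(x-y)\rho_\delta(t-s)$ with standard symmetric mollifiers $\rho_\eps,\rho_\delta$ and a nonnegative $\phi\in\m{Cinfty010T}$ vanishing in a neighborhood of $\{0,1\}\times[0,T]$. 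Summing the two inequalities and passing to the limit $\eps,\delta\to 0$, using the $\m{C0TL1}$ continuity from Definition~\ref{def: entropy solution} to collapse the diagonal, produces the Kato-type inequality
\begin{multline*}
    \int_0^T\!\!\int_0^1 \bigl(|u-v|\phi_t + \sign(u-v)(f(u)-f(v))\phi_x + \gamma\sign(u-v)(P[u]-P[v])\phi\bigr)\dx\dt \\
    + \int_0^1 |u_0-v_0|\phi(x,0)\dx - \int_0^1 |u(x,T)-v(x,T)|\phi(x,T)\dx \geq 0.
\end{multline*}

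To propagate this to the boundary I would take $\phi(x,t)=\chi_\kappa(x)\psi(t)$, where $\chi_\kappa$ is a smooth cutoff equal to $1$ outside $\kappa$-neighborhoods of $x=0$ and $x=1$ and vanishing at those endpoints, and $\psi$ is a nonnegative smooth approximation of $\mathbf{1}_{[0,t^\ast]}$ for a fixed $t^\ast\in(0,T]$. As $\kappa\to 0$, the derivative $\chi_\kappa'$ concentrates at the lateral boundary and, invoking the strong traces $u_0^\tau,u_1^\tau,v_0^\tau,v_1^\tau$ guaranteed by the remark following Definition~\ref{def: entropy solution}, produces boundary contributions built from $\sign(u_i^\tau-v_i^\tau)(f(u_i^\tau)-f(v_i^\tau))$, $i=0,1$. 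The hard step is controlling the signs of these terms: since $u,v$ share the data $\alpha,\beta$, I would apply the Dubois--LeFloch condition \eqref{entropy bc} for $u$ with $k=v_0^\tau$ and for $v$ with $k=u_0^\tau$ (approximating the Kru\v{z}kov pair by smooth convex entropies as in the remark), add the resulting inequalities, and thereby obtain $\sign(u_0^\tau-v_0^\tau)(f(u_0^\tau)-f(v_0^\tau))\leq 0$ almost everywhere, with the analogous reversed-sign inequality at $x=1$. This permits dropping the boundary contributions and is the principal novelty compared with the periodic setting of \cite{coclite2017convergent}.

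It then remains to control the nonlocal source term. The linearity of $P$ together with $|\sign(u-v)|\leq 1$ yields
\begin{equation*}
    \int_0^1 \bigl|\sign(u-v)(x,t)(P[u]-P[v])(x,t)\bigr|\dx \leq \int_0^1\!\!\int_0^x |u(y,t)-v(y,t)|\dy\dx \leq \norm{L101}{u(\cdot,t)-v(\cdot,t)}.
\end{equation*}
Sending $\kappa\to 0$ and letting $\psi$ approach $\mathbf{1}_{[0,t^\ast]}$ gives, for almost every $t^\ast\in(0,T]$,
\begin{equation*}
    \norm{L101}{u(\cdot,t^\ast)-v(\cdot,t^\ast)} \leq \norm{L101}{u_0-v_0} + \gamma\int_0^{t^\ast} \norm{L101}{u(\cdot,t)-v(\cdot,t)}\dt,
\end{equation*}
and Gronwall's lemma delivers the claimed exponential bound at $t^\ast=T$; uniqueness follows by taking $u_0=v_0$.
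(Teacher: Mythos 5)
Your overall architecture coincides with the paper's: doubling of variables to reach a Kato-type inequality, a space--time cutoff to bring in the strong traces, a sign argument for the lateral boundary terms via the Dubois--LeFloch condition \eqref{entropy bc}, an elementary $\m{L1}$ estimate for the nonlocal term, and Gronwall. The one step that does not go through as written is exactly the one you call the hard step. Taking \eqref{entropy bc} for $u$ with $k=v_0^{\tau}(t)$ and for $v$ with $k=u_0^{\tau}(t)$ and adding the two inequalities yields, after simplification,
\begin{equation*}
  2\,q\bigl(u_0^{\tau},v_0^{\tau}\bigr)\;\leq\;\bigl(\sign(\alpha-v_0^{\tau})-\sign(\alpha-u_0^{\tau})\bigr)\bigl(f(u_0^{\tau})-f(v_0^{\tau})\bigr).
\end{equation*}
When $\alpha(t)$ does not lie strictly between the two traces this does give $q(u_0^{\tau},v_0^{\tau})\leq 0$; but when it does, say $u_0^{\tau}<\alpha<v_0^{\tau}$, the right-hand side equals $2\bigl(f(v_0^{\tau})-f(u_0^{\tau})\bigr)=2\,q(u_0^{\tau},v_0^{\tau})$ and the inequality degenerates to a tautology. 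The reason is that in this configuration the levels $k=v_0^{\tau}$ and $k=u_0^{\tau}$ lie outside the ranges in which either trace's boundary condition carries information, so your choice of Kru\v{z}kov parameters extracts nothing.

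The repair --- and what the paper actually does --- is to use the \emph{same} level $k(t)$ in both boundary inequalities, chosen as the middle value of $\{u_0^{\tau}(t),\alpha(t),v_0^{\tau}(t)\}$. In the problematic case this means $k=\alpha$, for which \eqref{entropy bc} reduces to $q(u_0^{\tau},\alpha)\leq 0$ and $q(v_0^{\tau},\alpha)\leq 0$, i.e. $f(v_0^{\tau})\leq f(\alpha)\leq f(u_0^{\tau})$, whence $q(u_0^{\tau},v_0^{\tau})=f(v_0^{\tau})-f(u_0^{\tau})\leq 0$; the remaining orderings are covered by $k=u_0^{\tau}$ or $k=v_0^{\tau}$ essentially as you intended, and the mirror argument gives $q(u_1^{\tau},v_1^{\tau})\geq 0$ at $x=1$. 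With this fix the rest of your argument --- including your post-collapse bound on the source term, which is if anything cleaner than the paper's estimate of $|P[u](x,t)-P[v](y,s)|$ carried out before the mollifier limit --- matches the paper's proof.
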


\begin{proof}
  Let $u$ and $v$ be entropy solutions with initial datum $u_0$ and $v_0$ respectively. We will now consider the entropy inequality \eqref{entropy condition} with Kru\v{z}kov entropy pairs and a nonnegative test function $\phi$ with support away from $t=0$ and $t=T$. By taking \eqref{entropy condition} for $u$ in the variables $(x,t)$ and for $v$ in the variables $(y,s)$ both with the test function $\phi(x,t,y,s)$, integrating each with respect to the respective other two variables and adding them we get
  \begin{multline*}
    \int_0^T\int_0^1\int_0^T\int_0^1 \Big(|u(x,t)-v(y,s)|(\phi_t+\phi_s) + q(u(x,t),v(y,s))(\phi_x+\phi_y) \\
     +\gamma \sign(u(x,t)-v(y,s))(P[u](x,t)-P[v](y,s))\phi\Big)\dx\dt\dy\ds \geq 0 
  \end{multline*}
  Now, let $\phi = \psi(\frac{x+y}{2},\frac{t+s}{2})\omega_{\eps}(x-y)\omega_{\eps_0}(t-s)$, where $0\leq \psi\leq 1$ is a test function to be chosen later and $\omega_{\eps,\eps_0}$ are symmetric standard mollifiers. Then, using \cite[Lemma 2.9]{holden2015front}, we find that the terms not involving $P$ converge towards
  \begin{equation*}
    \int_0^T\int_0^1 \Big( |u-v|\psi_t + q(u,v)\psi_x \Big)\dx\dt,
  \end{equation*}
  as $\eps,\eps_0\to 0$. Regarding the remaining term, we use
  \begin{align*}
    |P[u](x,t) & - P[v](y,s)| \leq |P[u](x,t)-P[v](x,s)|+|P[v](x,s)-P[v](y,s)|\\
    &\leq \norm{L101}{u(\cdot,t)-v(\cdot,s)} + |x-y|\cdot \norm{Linfty01}{v(\cdot,s)}\\
    &\leq \norm{L101}{u(\cdot,t)-v(\cdot,t)} +\norm{L101}{v(\cdot,t)-v(\cdot,s)} + |x-y|\cdot \norm{Linfty01}{v(\cdot,s)}.
  \end{align*}
  Hence, using that weak solutions of bounded variation are Lipschitz continuous in time \cite[Theorem 7.10]{holden2015front}, we find
  \begin{align*}
    &\int_0^T\int_0^1\int_0^T\int_0^1 |P[u](x,t)-P[v](y,s)|\phi\dx\dt\dy\ds \\
    &\leq \int_0^T \norm{L101}{u(\cdot,t)-v(\cdot,t)}\dt  + \int_0^T\int_0^T \norm{L101}{v(\cdot,t)-v(\cdot,s)}\omega_{\eps_0}(t-s)\dt\ds \\
    &\qquad\qquad+ \int_0^T\int_0^1\int_0^1 |x-y|\cdot \norm{Linfty01}{v(\cdot,s)}\omega_{\eps}(x-y)\dx\dy\ds\\
    &\leq \int_0^T \norm{L101}{u(\cdot,t)-v(\cdot,t)}\dt  + C\int_0^T\int_0^T |t-s|\omega_{\eps_0}(t-s)\dt\ds + \eps\norm{Linfty0T01}{v}\\
    &\leq \int_0^T \norm{L101}{u(\cdot,t)-v(\cdot,t)}\dt +C_T(\eps_0+\eps)\\
    &\to \int_0^T \norm{L101}{u(\cdot,t)-v(\cdot,t)}\dt
  \end{align*}
as $\eps,\eps_0\to 0$. Consequently, $u$ and $v$ satisfy
  \begin{equation*}
    \int_0^T\int_0^1 \Big( |u-v|\psi_t + q(u,v)\psi_x \Big)\dx\dt + \gamma \int_0^T\norm{L101}{u(\cdot,t)-v(\cdot,t)}\dt \geq 0.
  \end{equation*}
  Let now
  \begin{equation*}
    \chi_{\delta,a}(\xi)=\int_0^{\xi} \left(\omega_{\delta/2}(\zeta-\delta/2) - \omega_{\delta/2}(\zeta-(a-\delta/2))\right)\operatorname{d\zeta}
  \end{equation*}
  which is a smooth approximation to $\chi_{[0,a]}$. Then we define $\psi(x,t) = \chi_{\delta,1}(x)\chi_{\delta,T}(t)$. Taking $\delta\to 0$, we get
  \begin{multline}
    \int_0^1|u_0(x)-v_0(x)|\dx -\int_0^1|u(x,T)-v(x,T)|\dx + \gamma \int_0^T\norm{L101}{u(\cdot,t)-v(\cdot,t)}\dt \\ \geq \int_0^T q(u^{\tau}_1(t),v^{\tau}_1(t))\dt - \int_0^T q(u^{\tau}_0(t),v^{\tau}_0(t))\dt. \label{almost semigroup property}
  \end{multline}
  Note that by choosing
    \begin{equation*}
      k(t) = \begin{cases}
        u^{\tau}_0(t) &\text{if }u^{\tau}_0(t)\in I[\alpha(t),v^{\tau}_0(t)]\\
        \alpha(t) &\text{if }\alpha(t)\in I[v^{\tau}_0(t),u^{\tau}_0(t)]\\
        v^{\tau}_0(t) &\text{if }v^{\tau}_0(t)\in I[u^{\tau}_0(t),\alpha(t)]
      \end{cases}
    \end{equation*}
    in the boundary entropy condition \eqref{entropy bc} we get
    \begin{align*}
      q(u^{\tau}_0(t),v^{\tau}_0(t)) &\leq \frac{1}{2} \left( q(u_0^{\tau}(t)) - q(\alpha(t)) -\eta'(\alpha(t))(f(u_0^{\tau}(t))-f(\alpha(t)))\right.\\
      &\phantom{\leq} \phantom{\frac{1}{2}}\left.+ q(v_0^{\tau}(t)) - q(\alpha(t)) -\eta'(\alpha(t))(f(v_0^{\tau}(t))-f(\alpha(t))) \right)\leq 0
    \end{align*}
    and similarly $q(u^{\tau}_1(t),v^{\tau}_1(t)) \geq 0$ for a.e. $t$. Thus the right-hand side of \eqref{almost semigroup property} is nonnegative. An application of Gronwall's lemma finishes the proof.
\end{proof}

\section{Numerical experiments}\label{sec: numerical experiments}
In this section we want to conduct two numerical experiments to illustrate our results. Here, we choose $f(u)=u^2/2$ and $\gamma=1$. Our first numerical experiment uses a well-studied travelling wave solution of the Ostrovsky--Hunter equation with initial datum given by the `corner wave':
\begin{equation*}
  u_0(x) = \begin{cases}
    \frac{1}{6}(x-\frac{1}{2})^2+\frac{1}{6}(x-\frac{1}{2})+\frac{1}{36}, &\text{if }x\in[0,\frac{1}{2}],\\
    \frac{1}{6}(x-\frac{1}{2})^2-\frac{1}{6}(x-\frac{1}{2})+\frac{1}{36}, &\text{if }x\in[\frac{1}{2},1].
  \end{cases}
 \end{equation*}
The `corner wave' consists of two parabolas forming a sharp corner at $x=\frac{1}{2}$ (cf. Figure \ref{fig: initial datum}).
\begin{figure}[h]
	\floatbox[{\capbeside\thisfloatsetup{capbesideposition={left,top},capbesidewidth=4cm}}]{figure}[\FBwidth]
	{\caption{Initial datum for both numerical experiments.}\label{fig: initial datum}}
	% {\includegraphics[width=.6\textwidth]{Initial_datum.eps}}
	{\begin{tikzpicture}
	\begin{axis}[xtick={0,0.5,1},scaled y ticks = false, y tick label style={/pgf/number format/fixed},ytick={-0.02,0,0.03}]
		\addplot+[blue,mark=none, thick]
		coordinates{
		( 0.0 , -0.0138882785373 )
( 0.0078125 , -0.0138788689507 )
( 0.015625 , -0.0138493686252 )
( 0.0234375 , -0.0137995232476 )
( 0.03125 , -0.0137293328179 )
( 0.0390625 , -0.0136387973362 )
( 0.046875 , -0.0135279168023 )
( 0.0546875 , -0.0133966912164 )
( 0.0625 , -0.0132451205783 )
( 0.0703125 , -0.0130732048882 )
( 0.078125 , -0.0128809441461 )
( 0.0859375 , -0.0126683383518 )
( 0.09375 , -0.0124353875054 )
( 0.1015625 , -0.012182091607 )
( 0.109375 , -0.0119084506565 )
( 0.1171875 , -0.0116144646539 )
( 0.125 , -0.0113001335992 )
( 0.1328125 , -0.0109654574924 )
( 0.140625 , -0.0106104363336 )
( 0.1484375 , -0.0102350701226 )
( 0.15625 , -0.00983935885959 )
( 0.1640625 , -0.00942330254449 )
( 0.171875 , -0.0089869011773 )
( 0.1796875 , -0.00853015475803 )
( 0.1875 , -0.00805306328668 )
( 0.1953125 , -0.00755562676324 )
( 0.203125 , -0.00703784518772 )
( 0.2109375 , -0.00649971856011 )
( 0.21875 , -0.00594124688043 )
( 0.2265625 , -0.00536243014865 )
( 0.234375 , -0.0047632683648 )
( 0.2421875 , -0.00414376152886 )
( 0.25 , -0.00350390964084 )
( 0.2578125 , -0.00284371270074 )
( 0.265625 , -0.00216317070855 )
( 0.2734375 , -0.00146228366428 )
( 0.28125 , -0.000741051567925 )
( 0.2890625 , 5.25580512153e-07 )
( 0.296875 , 0.000762447781033 )
( 0.3046875 , 0.00154471503364 )
( 0.3125 , 0.00234732733832 )
( 0.3203125 , 0.0031702846951 )
( 0.328125 , 0.00401358710395 )
( 0.3359375 , 0.00487723456489 )
( 0.34375 , 0.00576122707791 )
( 0.3515625 , 0.00666556464301 )
( 0.359375 , 0.0075902472602 )
( 0.3671875 , 0.00853527492947 )
( 0.375 , 0.00950064765082 )
( 0.3828125 , 0.0104863654243 )
( 0.390625 , 0.0114924282498 )
( 0.3984375 , 0.0125188361274 )
( 0.40625 , 0.0135655890571 )
( 0.4140625 , 0.0146326870388 )
( 0.421875 , 0.0157201300727 )
( 0.4296875 , 0.0168279181586 )
( 0.4375 , 0.0179560512967 )
( 0.4453125 , 0.0191045294868 )
( 0.453125 , 0.0202733527289 )
( 0.4609375 , 0.0214625210232 )
( 0.46875 , 0.0226720343696 )
( 0.4765625 , 0.023901892768 )
( 0.484375 , 0.0251520962185 )
( 0.4921875 , 0.0264226447211 )
( 0.5 , 0.0274531216092 )
( 0.5078125 , 0.0265508185493 )
( 0.515625 , 0.0252782355414 )
( 0.5234375 , 0.0240259975857 )
( 0.53125 , 0.0227941046821 )
( 0.5390625 , 0.0215825568305 )
( 0.546875 , 0.020391354031 )
( 0.5546875 , 0.0192204962836 )
( 0.5625 , 0.0180699835883 )
( 0.5703125 , 0.0169398159451 )
( 0.578125 , 0.0158299933539 )
( 0.5859375 , 0.0147405158149 )
( 0.59375 , 0.0136713833279 )
( 0.6015625 , 0.012622595893 )
( 0.609375 , 0.0115941535102 )
( 0.6171875 , 0.0105860561795 )
( 0.625 , 0.00959830390082 )
( 0.6328125 , 0.00863089667426 )
( 0.640625 , 0.00768383449978 )
( 0.6484375 , 0.00675711737739 )
( 0.65625 , 0.00585074530707 )
( 0.6640625 , 0.00496471828885 )
( 0.671875 , 0.0040990363227 )
( 0.6796875 , 0.00325369940864 )
( 0.6875 , 0.00242870754666 )
( 0.6953125 , 0.00162406073676 )
( 0.703125 , 0.00083975897895 )
( 0.7109375 , 7.58022732205e-05 )
( 0.71875 , -0.000667809380425 )
( 0.7265625 , -0.00139107598199 )
( 0.734375 , -0.00209399753147 )
( 0.7421875 , -0.00277657402886 )
( 0.75 , -0.00343880547418 )
( 0.7578125 , -0.0040806918674 )
( 0.765625 , -0.00470223320855 )
( 0.7734375 , -0.00530342949761 )
( 0.78125 , -0.00588428073459 )
( 0.7890625 , -0.00644478691949 )
( 0.796875 , -0.0069849480523 )
( 0.8046875 , -0.00750476413303 )
( 0.8125 , -0.00800423516168 )
( 0.8203125 , -0.00848336113824 )
( 0.828125 , -0.00894214206272 )
( 0.8359375 , -0.00938057793511 )
( 0.84375 , -0.00979866875543 )
( 0.8515625 , -0.0101964145237 )
( 0.859375 , -0.0105738152398 )
( 0.8671875 , -0.0109308709039 )
( 0.875 , -0.0112675815158 )
( 0.8828125 , -0.0115839470757 )
( 0.890625 , -0.0118799675836 )
( 0.8984375 , -0.0121556430393 )
( 0.90625 , -0.0124109734429 )
( 0.9140625 , -0.0126459587945 )
( 0.921875 , -0.012860599094 )
( 0.9296875 , -0.0130548943414 )
( 0.9375 , -0.0132288445367 )
( 0.9453125 , -0.0133824496799 )
( 0.953125 , -0.0135157097711 )
( 0.9609375 , -0.0136286248101 )
( 0.96875 , -0.0137211947971 )
( 0.9765625 , -0.013793419732 )
( 0.984375 , -0.0138452996148 )
( 0.9921875 , -0.0138768344455 )
( 1.0 , -0.013887769911 )
		};
	\end{axis}
	\end{tikzpicture}}
\end{figure}
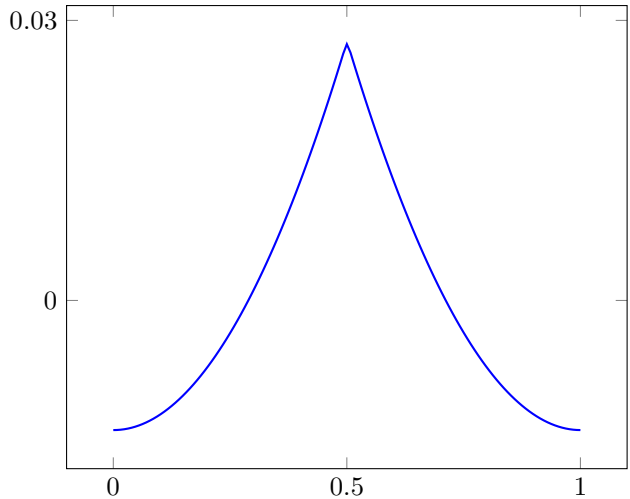
The travelling wave solution is
\begin{equation*}
  u_{\operatorname{ex}}(x,t) = u_0\left(x-\frac{t}{36} - \left\lfloor x-\frac{t}{36}\right\rfloor\right)
\end{equation*}
which returns to its initial state after a period of $T=36$. The `corner wave' is the limit case of a family of smooth travelling wave solutions that has been investigated by several authors \cite{hunter1990numerical,ostrovsky1978nonlinear,boyd2005ostrovsky,stepanyants2006stationary,parkes2007explicit}. In this section we will not consider $P(0)=0$, but $\int_0^1 P =0$, which gives
\begin{equation*}
  (P[u])(x,t) = \int_0^x u(y,t)\dy - \int_0^1\int_0^y u(z,t)\dz.
 \end{equation*}
This is motivated by the fact that the latter choice limits the growth of the $\m{Linfty}$ norm of the solution for our experiments.
Figure \ref{fig: LxF correct boundaries} shows the exact entropy solution and a numerical solution both at $T=36$.
\begin{figure}[h]
  	\floatbox[{\capbeside\thisfloatsetup{capbesideposition={left,top},capbesidewidth=4cm}}]{figure}[\FBwidth]
  	{\caption{Explicit and numerical solution for Experiment $1$ at $T=36$}\label{fig: LxF correct boundaries}}
    % {\includegraphics[width=.6\textwidth]{LxF_periodic.eps}}
    {\begin{tikzpicture}
	\begin{axis}[xtick={0,0.5,1},scaled y ticks = false, y tick label style={/pgf/number format/fixed},ytick={-0.02,0,0.03}]
		\addplot+[blue,mark=none, thick]
		coordinates{
		( 0.0 , -0.0138882785373 )
( 0.0078125 , -0.0138788689507 )
( 0.015625 , -0.0138493686252 )
( 0.0234375 , -0.0137995232476 )
( 0.03125 , -0.0137293328179 )
( 0.0390625 , -0.0136387973362 )
( 0.046875 , -0.0135279168023 )
( 0.0546875 , -0.0133966912164 )
( 0.0625 , -0.0132451205783 )
( 0.0703125 , -0.0130732048882 )
( 0.078125 , -0.0128809441461 )
( 0.0859375 , -0.0126683383518 )
( 0.09375 , -0.0124353875054 )
( 0.1015625 , -0.012182091607 )
( 0.109375 , -0.0119084506565 )
( 0.1171875 , -0.0116144646539 )
( 0.125 , -0.0113001335992 )
( 0.1328125 , -0.0109654574924 )
( 0.140625 , -0.0106104363336 )
( 0.1484375 , -0.0102350701226 )
( 0.15625 , -0.00983935885959 )
( 0.1640625 , -0.00942330254449 )
( 0.171875 , -0.0089869011773 )
( 0.1796875 , -0.00853015475803 )
( 0.1875 , -0.00805306328668 )
( 0.1953125 , -0.00755562676324 )
( 0.203125 , -0.00703784518772 )
( 0.2109375 , -0.00649971856011 )
( 0.21875 , -0.00594124688043 )
( 0.2265625 , -0.00536243014865 )
( 0.234375 , -0.0047632683648 )
( 0.2421875 , -0.00414376152886 )
( 0.25 , -0.00350390964084 )
( 0.2578125 , -0.00284371270074 )
( 0.265625 , -0.00216317070855 )
( 0.2734375 , -0.00146228366428 )
( 0.28125 , -0.000741051567925 )
( 0.2890625 , 5.25580512153e-07 )
( 0.296875 , 0.000762447781033 )
( 0.3046875 , 0.00154471503364 )
( 0.3125 , 0.00234732733832 )
( 0.3203125 , 0.0031702846951 )
( 0.328125 , 0.00401358710395 )
( 0.3359375 , 0.00487723456489 )
( 0.34375 , 0.00576122707791 )
( 0.3515625 , 0.00666556464301 )
( 0.359375 , 0.0075902472602 )
( 0.3671875 , 0.00853527492947 )
( 0.375 , 0.00950064765082 )
( 0.3828125 , 0.0104863654243 )
( 0.390625 , 0.0114924282498 )
( 0.3984375 , 0.0125188361274 )
( 0.40625 , 0.0135655890571 )
( 0.4140625 , 0.0146326870388 )
( 0.421875 , 0.0157201300727 )
( 0.4296875 , 0.0168279181586 )
( 0.4375 , 0.0179560512967 )
( 0.4453125 , 0.0191045294868 )
( 0.453125 , 0.0202733527289 )
( 0.4609375 , 0.0214625210232 )
( 0.46875 , 0.0226720343696 )
( 0.4765625 , 0.023901892768 )
( 0.484375 , 0.0251520962185 )
( 0.4921875 , 0.0264226447211 )
( 0.5 , 0.0274531216092 )
( 0.5078125 , 0.0265508185493 )
( 0.515625 , 0.0252782355414 )
( 0.5234375 , 0.0240259975857 )
( 0.53125 , 0.0227941046821 )
( 0.5390625 , 0.0215825568305 )
( 0.546875 , 0.020391354031 )
( 0.5546875 , 0.0192204962836 )
( 0.5625 , 0.0180699835883 )
( 0.5703125 , 0.0169398159451 )
( 0.578125 , 0.0158299933539 )
( 0.5859375 , 0.0147405158149 )
( 0.59375 , 0.0136713833279 )
( 0.6015625 , 0.012622595893 )
( 0.609375 , 0.0115941535102 )
( 0.6171875 , 0.0105860561795 )
( 0.625 , 0.00959830390082 )
( 0.6328125 , 0.00863089667426 )
( 0.640625 , 0.00768383449978 )
( 0.6484375 , 0.00675711737739 )
( 0.65625 , 0.00585074530707 )
( 0.6640625 , 0.00496471828885 )
( 0.671875 , 0.0040990363227 )
( 0.6796875 , 0.00325369940864 )
( 0.6875 , 0.00242870754666 )
( 0.6953125 , 0.00162406073676 )
( 0.703125 , 0.00083975897895 )
( 0.7109375 , 7.58022732205e-05 )
( 0.71875 , -0.000667809380425 )
( 0.7265625 , -0.00139107598199 )
( 0.734375 , -0.00209399753147 )
( 0.7421875 , -0.00277657402886 )
( 0.75 , -0.00343880547418 )
( 0.7578125 , -0.0040806918674 )
( 0.765625 , -0.00470223320855 )
( 0.7734375 , -0.00530342949761 )
( 0.78125 , -0.00588428073459 )
( 0.7890625 , -0.00644478691949 )
( 0.796875 , -0.0069849480523 )
( 0.8046875 , -0.00750476413303 )
( 0.8125 , -0.00800423516168 )
( 0.8203125 , -0.00848336113824 )
( 0.828125 , -0.00894214206272 )
( 0.8359375 , -0.00938057793511 )
( 0.84375 , -0.00979866875543 )
( 0.8515625 , -0.0101964145237 )
( 0.859375 , -0.0105738152398 )
( 0.8671875 , -0.0109308709039 )
( 0.875 , -0.0112675815158 )
( 0.8828125 , -0.0115839470757 )
( 0.890625 , -0.0118799675836 )
( 0.8984375 , -0.0121556430393 )
( 0.90625 , -0.0124109734429 )
( 0.9140625 , -0.0126459587945 )
( 0.921875 , -0.012860599094 )
( 0.9296875 , -0.0130548943414 )
( 0.9375 , -0.0132288445367 )
( 0.9453125 , -0.0133824496799 )
( 0.953125 , -0.0135157097711 )
( 0.9609375 , -0.0136286248101 )
( 0.96875 , -0.0137211947971 )
( 0.9765625 , -0.013793419732 )
( 0.984375 , -0.0138452996148 )
( 0.9921875 , -0.0138768344455 )
( 1.0 , -0.013887769911 )
		};
		\addplot[mark=none, thick, dashed, red]
		coordinates{
		( 0.0 , -0.0138869936069 )
( 0.0078125 , -0.0133645417875 )
( 0.015625 , -0.0130613106679 )
( 0.0234375 , -0.0128532955302 )
( 0.03125 , -0.0126835336338 )
( 0.0390625 , -0.0125223630693 )
( 0.046875 , -0.0123573487692 )
( 0.0546875 , -0.0121797677976 )
( 0.0625 , -0.0119878481835 )
( 0.0703125 , -0.011777795171 )
( 0.078125 , -0.0115506684791 )
( 0.0859375 , -0.0113037822998 )
( 0.09375 , -0.0110391159006 )
( 0.1015625 , -0.0107541391208 )
( 0.109375 , -0.0104512584257 )
( 0.1171875 , -0.0101278477257 )
( 0.125 , -0.00978660272618 )
( 0.1328125 , -0.00942473134206 )
( 0.140625 , -0.00904517420698 )
( 0.1484375 , -0.00864495457025 )
( 0.15625 , -0.00822724031872 )
( 0.1640625 , -0.00778886815592 )
( 0.171875 , -0.00733322378619 )
( 0.1796875 , -0.00685696230258 )
( 0.1875 , -0.00636368063886 )
( 0.1953125 , -0.00584986092414 )
( 0.203125 , -0.00531930599913 )
( 0.2109375 , -0.00476833643882 )
( 0.21875 , -0.00420095662822 )
( 0.2265625 , -0.00361333964863 )
( 0.234375 , -0.00300968826599 )
( 0.2421875 , -0.00238604514392 )
( 0.25 , -0.00174681006279 )
( 0.2578125 , -0.00108791617059 )
( 0.265625 , -0.000413961668201 )
( 0.2734375 , 0.000279203693621 )
( 0.28125 , 0.000986777612747 )
( 0.2890625 , 0.00171296013969 )
( 0.296875 , 0.00245273259435 )
( 0.3046875 , 0.00321030010297 )
( 0.3125 , 0.00398040508554 )
( 0.3203125 , 0.00476719594471 )
( 0.328125 , 0.00556513758549 )
( 0.3359375 , 0.00637823301272 )
( 0.34375 , 0.00720060538501 )
( 0.3515625 , 0.00803598038187 )
( 0.359375 , 0.00887803390737 )
( 0.3671875 , 0.00973001150672 )
( 0.375 , 0.0105849693642 )
( 0.3828125 , 0.0114453526415 )
( 0.390625 , 0.0123033174703 )
( 0.3984375 , 0.0131599943574 )
( 0.40625 , 0.0140061917615 )
( 0.4140625 , 0.0148409008472 )
( 0.421875 , 0.0156529042684 )
( 0.4296875 , 0.016437780429 )
( 0.4375 , 0.0171814013874 )
( 0.4453125 , 0.0178741909916 )
( 0.453125 , 0.0184984588597 )
( 0.4609375 , 0.0190378575276 )
( 0.46875 , 0.0194723766847 )
( 0.4765625 , 0.0197795858466 )
( 0.484375 , 0.0199438946974 )
( 0.4921875 , 0.0199432759265 )
( 0.5 , 0.019781007942 )
( 0.5078125 , 0.0194458985715 )
( 0.515625 , 0.0189726590327 )
( 0.5234375 , 0.0183609849876 )
( 0.53125 , 0.0176702335088 )
( 0.5390625 , 0.0168941559081 )
( 0.546875 , 0.0160970083932 )
( 0.5546875 , 0.0152533904209 )
( 0.5625 , 0.014422808197 )
( 0.5703125 , 0.0135636776494 )
( 0.578125 , 0.0127322861197 )
( 0.5859375 , 0.0118794163406 )
( 0.59375 , 0.0110602560516 )
( 0.6015625 , 0.0102235526328 )
( 0.609375 , 0.00942325245173 )
( 0.6171875 , 0.0086086982782 )
( 0.625 , 0.00783172970539 )
( 0.6328125 , 0.00704354063942 )
( 0.640625 , 0.00629323224575 )
( 0.6484375 , 0.00553447272098 )
( 0.65625 , 0.00481334621021 )
( 0.6640625 , 0.00408625627669 )
( 0.671875 , 0.00339623654992 )
( 0.6796875 , 0.00270246590251 )
( 0.6875 , 0.00204503148396 )
( 0.6953125 , 0.00138579827789 )
( 0.703125 , 0.000762073354944 )
( 0.7109375 , 0.000138247497177 )
( 0.71875 , -0.000450974269605 )
( 0.7265625 , -0.00103887338466 )
( 0.734375 , -0.00159319290044 )
( 0.7421875 , -0.00214510352427 )
( 0.75 , -0.00266469158242 )
( 0.7578125 , -0.00318123889044 )
( 0.765625 , -0.00366713311086 )
( 0.7734375 , -0.00414997380304 )
( 0.78125 , -0.00460446899714 )
( 0.7890625 , -0.00505669297115 )
( 0.796875 , -0.00548373174626 )
( 0.8046875 , -0.00591018799603 )
( 0.8125 , -0.00631557635523 )
( 0.8203125 , -0.00672290813423 )
( 0.828125 , -0.00711410415004 )
( 0.8359375 , -0.00751022065885 )
( 0.84375 , -0.00789541776535 )
( 0.8515625 , -0.00828815788823 )
( 0.859375 , -0.00867449069892 )
( 0.8671875 , -0.00906944780485 )
( 0.875 , -0.00946049293696 )
( 0.8828125 , -0.00985844573332 )
( 0.890625 , -0.0102517784356 )
( 0.8984375 , -0.0106467820545 )
( 0.90625 , -0.0110328671663 )
( 0.9140625 , -0.0114122958564 )
( 0.921875 , -0.0117757989269 )
( 0.9296875 , -0.0121229013021 )
( 0.9375 , -0.0124462652985 )
( 0.9453125 , -0.0127442624037 )
( 0.953125 , -0.0130118786702 )
( 0.9609375 , -0.0132475617796 )
( 0.96875 , -0.0134484943935 )
( 0.9765625 , -0.0136137467577 )
( 0.984375 , -0.0137421256137 )
( 0.9921875 , -0.0138333006876 )
( 1.0 , -0.0138869936069 )
};
	\end{axis}
	\end{tikzpicture}}
\end{figure}
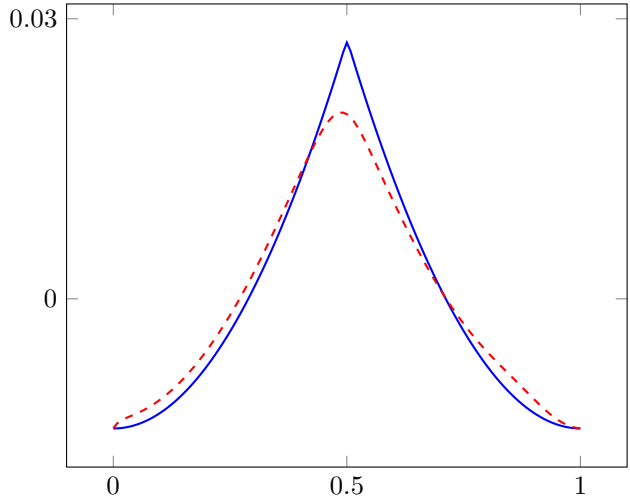
The numerical solution is calculated by the Lax-Friedrichs method with boundary conditions set as the explicit solution at $x=0$ and $x=1$ respectively and a grid discretization parameter of $\Delta x=2^{-7}$.

For this and all subsequent numerical experiments we use\footnote{Here, we have $\norm{Linfty01}{f'(u_0)}=1/36$ and therefore $\lambda=\Delta t/\Delta x$ should satisfy $\lambda\leq 36$. However, since the $\m{Linfty}$ bound from Lemma \ref{Linfty bound} allows for some growth of $\norm*{\infty}{u^n}$ choosing a smaller $\lambda$ can be neccessary.} $\Delta t/\Delta x=25$. Additionally, for the first experiment the known exact entropy solution is used to calculate the error:
\begin{equation*}
	\operatorname{err}_{\m{L1}}^1(\Delta t)=\norm{L101}{u_{\Delta t}(\cdot,36)-u_{\operatorname{ex}}(\cdot,36)}.
\end{equation*}
Table \ref{tbl: LxF correct boundaries} shows the $\m{L1}$ error between various numerical solutions and the exact solution, as well as the respective experimental convergence rates.
\begin{table}[h]
\renewcommand{\arraystretch}{1.3}
\floatbox[{\capbeside\thisfloatsetup{capbesideposition={left,top},capbesidewidth=4cm}}]{table}[\FBwidth]
{\caption{$\m{L1}$ errors and convergence rates for Experiment 1}\label{tbl: LxF correct boundaries}}
{\begin{tabular*}{.6\textwidth}{lllll}
  \toprule
  $\Delta x$ & Lax-Friedrichs & Rate & Engquist-Osher & Rate\\
  \midrule
  $2^{-6}$ & $2.84\cdot 10^{-3}$ && $1.39\cdot10^{-3}$ & \\
  $2^{-7}$ & $ 1.72\cdot 10^{-3}$ &  $0.72$ & $6.92\cdot 10^{-4}$ & $1.00$ \\ 
  $2^{-8}$ & $ 9.71\cdot 10^{-4}$ &  $0.82$ & $3.61\cdot 10^{-4}$ & $0.94$ \\ 
  $2^{-9}$ & $ 5.32\cdot 10^{-4}$ &  $0.86$ & $1.90\cdot 10^{-4}$ & $0.93$ \\ 
  $2^{-10}$ & $ 2.83\cdot 10^{-4}$ &  $0.91$ & $1.01\cdot 10^{-4}$ & $0.91$ \\ 
  \bottomrule
\end{tabular*}}
\end{table}
Comparing these results to Table 1 in~\cite{coclite2017convergent}, we see that our numerical scheme is consistent with the periodic case.

In our second experiment we use the same initial datum, but set the right boundary datum to zero.
Figure \ref{fig: LxF right boundary off} displays two numerical solutions, one on a moderate mesh ($\Delta x =2^{-7}$) calculated with the Lax--Friedrichs flux and one on a fine mesh ($\Delta x^*=2^{-11}$) calculated with the Engquist--Osher flux.
\begin{figure}[h]
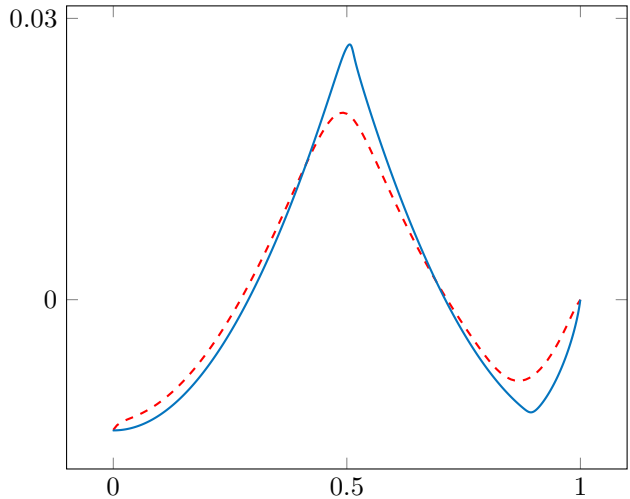

  	\floatbox[{\capbeside\thisfloatsetup{capbesideposition={left,top},capbesidewidth=4cm}}]{figure}[\FBwidth]
  	{\caption{Numerical solutions for Experiment 2 at $T=36$ calculated with the Lax--Friedrichs flux and $\Delta x=2^{-7}$ (dashed) and with the Engquist--Osher flux and $\Delta x^*=2^{-11}$ (straight)
    %Numerical solutions at $T=36$ calculated with the Lax-Friedrichs flux (dashed) and numerical solution on fine mesh calculated with the Engquist--Osher flux (straight line) ($\Delta x =2^{-7}$ respectively $\Delta x = 2^{-11}$, CFL number $25$)
    }\label{fig: LxF right boundary off}}
    % {\includegraphics[width=.6\textwidth]{Right_boundary_off.eps}}
    {% [inline block 0: 1 envs, 78220 chars -> data_tex | \begin{tikzpicture} 	\begin{axis}[xtick={0,0.5,1},scaled y ticks = false, y tick label style={/pgf/number format/fixed},...]
}
\end{figure}
With no explicit entropy solution at hand we consider a numerical solution on a fine grid ($\Delta x^*=2^{-11}$) in order to calculate the $\m{L1}$ errors in the second experiment, i.e.,
\begin{equation*}
	\operatorname{err}_{\m{L1}}^2(\Delta t)=\norm{L101}{u_{\Delta t}(\cdot,36)-u_{\Delta t^*}(\cdot,36)}.
\end{equation*}
Here, $u_{\Delta t}$ and $u_{\Delta t^*}$ are always calculated based on the same numerical method.
Finally, in Table \ref{tbl: LxF right boundary off} we compare the $\m{L1}$ errors between various numerical solutions and provide the experimental convergence rates.
\begin{table}[h]
\renewcommand{\arraystretch}{1.3}
\floatbox[{\capbeside\thisfloatsetup{capbesideposition={left,top},capbesidewidth=4cm}}]{table}[\FBwidth]
{\caption{$\m{L1}$ errors and convergence rates for Experiment 2}\label{tbl: LxF right boundary off}}
{\begin{tabular*}{.6\textwidth}{lllll}
  \toprule
  $\Delta x$ & Lax-Friedrichs & Rate & Engquist-Osher & Rate\\
  \midrule
  $2^{-6}$ &  $3.00\cdot 10^{-3}$  &         & $ 1.36\cdot 10^{-3}$  & \\
  $2^{-7}$ &  $1.90\cdot 10^{-3}$  &  $0.66$ & $ 6.60\cdot 10^{-4}$ & $1.04$ \\ 
  $2^{-8}$ &  $1.16\cdot 10^{-3}$  &  $0.71$ & $ 3.24\cdot 10^{-4}$ & $1.03$ \\ 
  $2^{-9}$ &  $6.88\cdot 10^{-4}$  &  $0.75$ & $ 1.50\cdot 10^{-4}$ & $1.11$ \\ 
  $2^{-10}$ & $ 4.05\cdot 10^{-4}$ &  $0.76$ & $ 5.83\cdot 10^{-5}$ & $1.36$ \\ 
  \bottomrule
\end{tabular*}}
\end{table}
One clearly sees that the Engquist--Osher flux leads to a better approximation in this experiment. This is due to the fact that the homogeneous boundary condition at $x=1$ constitutes a shock that propagates into the domain and that shocks are resolved better with the Engquist--Osher flux.

For conservation laws in $\R$ without source term the classical result concerning convergence rates in $\m{L1}$, due to Kuznetsov \cite{kuznetsov1976accuracy}, gives a convergence rate of $O(\Delta x^{1/2})$. The same convergence rate was shown in \cite{coclite2017convergent} for the Ostrovsky--Hunter equation with periodic boundary conditions. Although theoretical results estimating the convergence rate in the case of Dirichlet boundary conditions are highly desirable, such results are currently not at hand. However, in the absence of source terms Ohlberger and Vovelle \cite{ohlberger2006error} proof a rate of $O(\Delta x^{1/6})$ in a very general setting.

\section*{Acknowledgements}
The authors want to thank Nils Henrik Risebro from the University of Oslo for many insightful discussions.

% \begin{acknowledgements}
% The authors want to thank Nils Henrik Risebro from the University of Oslo for many insightful discussions.
% \end{acknowledgements}

% BibTeX users please use one of
% \bibliographystyle{spbasic}      % basic style, author-year citations
\bibliographystyle{siam}      % mathematics and physical sciences
% \bibliography{literature.bib}  % name your BibTeX data base

\end{document}